\newtheorem{theorem}{Theorem}[section]
\newtheorem{lem}[theorem]{Lemma}
\newtheorem{coro}[theorem]{Corollary}
\theoremstyle{definition}
\newtheorem{rem}[theorem]{Remark}
\def\Cay{\hbox{\rm Cay}}
\def\ZZZ{\mathbb{Z}}
\def\Ga{\Gamma}
\long\def\delete#1{}
\newcommand{\be}{\begin{equation}}
\newcommand{\ee}{\end{equation}}
\newcommand{\bea}{\begin{eqnarray}}
\newcommand{\eea}{\end{eqnarray}}
\newcommand{\bean}{\begin{eqnarray*}}
\newcommand{\eean}{\end{eqnarray*}}
\title{On subgroup perfect codes in Cayley graphs}
\author[a]{Junyang Zhang}
\author[b]{Sanming Zhou}
\affil[a]{{\small School of Mathematical Sciences, Chongqing Normal University, Chongqing 401331, P. R. China}}
\affil[b]{{\small School of Mathematics and Statistics, The University of Melbourne, Parkville, VIC 3010, Australia}}
\date{}
\begin{document}

\openup 0.5\jot
\maketitle

\renewcommand{\thefootnote}{\empty}
\footnotetext{E-mail addresses: jyzhang@cqnu.edu.cn (Junyang Zhang), sanming@unimelb.edu.au (Sanming Zhou)}

\begin{abstract}
A perfect code in a graph $\Ga = (V, E)$ is a subset $C$ of $V$ such that no two vertices in $C$ are adjacent and every vertex in $V \setminus C$ is adjacent to exactly one vertex in $C$. A subgroup $H$ of a group $G$ is called a subgroup perfect code of $G$ if there exists a Cayley graph of $G$ which admits $H$ as a perfect code. Equivalently, $H$ is a subgroup perfect code of $G$ if there exists an inverse-closed subset $A$ of $G$ containing the identity element such that $(A, H)$ is a tiling of $G$ in the sense that every element of $G$ can be uniquely expressed as the product of an element of $A$ and an element of $H$. In this paper we obtain multiple results on subgroup perfect codes of finite groups, including a few necessary and sufficient conditions for a subgroup of a finite group to be a subgroup perfect code, a few results involving $2$-subgroups in the study of subgroup perfect codes, and several results on subgroup perfect codes of metabelian groups, generalized dihedral groups, nilpotent groups and $2$-groups.

\medskip
{\em Keywords:} Cayley graph; perfect code; efficient dominating set; subgroup perfect code; tiling of finite groups

\medskip
{\em AMS subject classifications (2010):} 05C25, 05C69, 94B25
\end{abstract}

\section*{Preface}

This is the corrected version of our paper under the same title published in \emph{European J. Combin.} 91 (2021) 103228 (see \url{https://doi.org/10.1016/j.ejc.2020.103228} or \url{arXiv:2006.11104v1}). After the paper was published, an error was found in the proof of Theorem 3.1. (The claim that ``$xH\neq yH$ for distinct elements $x, y \in X$'' is invalid as $z^{-1}H$ may be equal to $wH$ for some $w\in T$.) We are grateful to Dr. Kai Yuan for bringing this error to our attention.

In this corrected version, we will revise Theorem 3.1 and its proof. Since Theorem 3.2 relies on Theorem 3.1, its statement and proof need to be revised as well. It turns out that the subgroup involved in Theorem 3.2 is required to be a $2$-group or have odd order or odd index, and the same condition is required in Corollaries 3.3 and 3.4 as well. The proofs of Theorems 3.7, 4.2 and 4.3 in the published version need to be modified due to their reliance on Corollaries 3.3 and 3.4, but the statements of these results remain true and hence are not changed. We will give new proofs of these three theorems using our revised Theorem 3.1.  

\section{Introduction}

All groups considered in the paper are finite, and all graphs considered are finite and undirected. Group-theoretic terminology and notation used in the paper are standard and can be found in, for example, \cite{KS2004}.

Let $\Gamma$ be a graph with vertex set $V(\Gamma)$ and edge set $E(\Gamma)$, and let $e$ be a positive integer. A subset $C$ of $V(\Gamma)$ is called \cite{Big, Kr86} a \emph{perfect $e$-error-correcting code} (or \emph{perfect $e$-code} for short) in $\Gamma$ if every vertex of $\Gamma$ is at distance no more than $e$ to exactly one vertex in $C$, where the \emph{distance} in $\Ga$ between two vertices is the length of a shortest path between the two vertices or $\infty$ if there is no path in $\Gamma$ joining them. A perfect $1$-code is usually called a {\em perfect code}. Equivalently, a subset $C$ of $V(\Gamma)$ is a perfect code in $\Gamma$ if $C$ is an independent set of $\Gamma$ and every vertex in $V(\Gamma) \setminus C$ is adjacent to exactly one vertex in $C$. A perfect code in a graph is also called an efficient dominating set \cite{DeS} or independent perfect dominating set \cite{Le} of the graph. As a convention, when $\Ga$ is an empty graph (that is, $E(\Ga) = \emptyset$), we treat $V(\Ga)$ as a perfect code in $\Ga$.

The notion of perfect $e$-codes in graphs originated \cite{Big, Kr86} from coding theory. In the case when $\Ga$ is the Hamming graph $H(n, q)$, the Hamming distance between words of length $n$ over an alphabet of size $q \ge 2$ is precisely the graph distance in $\Ga$, and therefore perfect $e$-codes in $\Ga$ are exactly those in the classical setting \cite{MS77} under the Hamming metric. Similarly, when $\Ga$ is the Cartesian product $L(n, q)$ of $n$ copies of the cycle of length $q \ge 3$, the Lee distance \cite{HK18} between words of length $n$ over an alphabet of size $q \ge 3$ is exactly the graph distance in $L(n, q)$, and hence perfect $e$-codes in $\Ga$ are precisely those under the Lee metric.

It is well known that Hamming graphs are distance-transitive \cite{BCN89}. This motivated Biggs \cite{Big} to study perfect $e$-codes in distance-transitive graphs as a generalization of perfect $e$-codes under the Hamming metric. In \cite{Big}, among other things Biggs generalized the celebrated Lloyd's Theorem \cite{Lens} for Hamming graphs to all distance-transitive graphs. Since the seminal works of Biggs \cite{Big} and Delsarte \cite{Del}, an extensive body of research has been devoted to perfect codes in distance-transitive graphs and, in general, in distance-regular graphs and association schemes \cite{BCN89}.
See, for example, \cite{B77, B77a, HS} and the survey papers \cite{Heden1, Va75}.

\medskip
\textbf{Perfect codes in Cayley graphs.}~
As mentioned in \cite{HXZ18}, perfect $e$-codes in Cayley graphs are another generalization of perfect $e$-codes under the Hamming or Lee metric. This is so because both $H(n, q)$ and $L(n, q)$ are Cayley graphs of the additive group $\ZZZ_q^n$. In fact, they are Cayley graphs of $\ZZZ_q^n$ with connection sets $S_H$ and $S_L$, respectively, where $S_H$ consists of all elements of $\ZZZ_q^n$ with precisely one nonzero coordinate, and $S_L$ consists of all elements of $\ZZZ_q^n$ such that exactly one coordinate is $\pm 1\mod q$ and all other coordinates are zero. In general, for a group $G$ with identity element $1$ and an \emph{inverse-closed} subset $S$ of $G \setminus \{1\}$ (that is, $S^{-1}:=\{x^{-1}: x \in S\} = S$), the {\em Cayley graph} ${\rm Cay}(G, S)$ of $G$ with \emph{connection set} $S$ is defined as the graph with vertex set $G$ such that two elements $x, y$ of $G$ are adjacent if and only if $yx^{-1}\in S$. For convenience, we allow $S$ to be $\emptyset$ or $G \setminus \{1\}$, in which case ${\rm Cay}(G, S)$ is the empty graph or complete graph with vertex set $G$, respectively.

Apart from being significant generalizations of perfect codes in the classical setting, perfect codes in Cayley graphs are also of considerable importance for factorizations and tilings of groups. A \emph{factorization} \cite{SS} of a group $G$ (into two factors) is an ordered pair of subsets $(A, B)$ of $G$ such that every element of $G$ can be uniquely written as $ab$ with $a \in A$ and $b \in B$. (Note that, unlike factorizations in group theory, here we do not need $A$ and $B$ to be subgroups of $G$, but we require a unique representation $ab$ of each element of $G$.) A factorization $(A, B)$ of $G$ such that $1 \in A \cap B$ is called a \emph{tiling} \cite{Dinitz06} or a normed factorization of $G$. Beginning with Haj\'{o}s \cite{Hajos} in his proof of a well-known conjecture of Minkowski, there is a long history of studying factorizations and tilings of Abelian groups; see, for example, \cite{Dinitz06, Szabo06, SS} for related results and background information. It is readily seen (see, for example, \cite[Lemma 2.10]{HXZ18}) that $(A, B)$ is a tiling of $G$ with $A$ inverse-closed if and only if $B$ is a perfect code of $\Cay(G, A \setminus \{1\})$ with $1 \in B$.

\medskip
\textbf{Subgroup perfect codes.}~
In recent years perfect codes in Cayley graphs have received considerable attention; see \cite[Section 1]{HXZ18} for a brief survey and \cite{DSLW16, FHZ, Ta13, Z15} for a few recent papers. In particular, perfect codes in Cayley graphs which are subgroups of the underlying groups are especially interesting since they are generalizations of perfect linear codes \cite{Va75} in the classical setting. Another interesting avenue of research is to study when a given subset of a group is a perfect code in some Cayley graph of the group. In this regard the following concepts were introduced by Huang et al. in \cite{HXZ18}: A subset $C$ of a group $G$ is called a {\em perfect code} of $G$ if there exists a Cayley graph of $G$ which admits $C$ as a perfect code; a perfect code of $G$ which is also a subgroup of $G$ is called \cite{HXZ18} a {\em subgroup perfect code} of $G$. As a convention, any group $G$ is considered as a perfect code of itself since $G$ is a perfect code in the empty Cayley graph $\Cay(G, \emptyset)$. The trivial subgroup $\{1\}$ is also a perfect code of $G$ since it is a perfect code in the complete graph $\Cay(G, G \setminus \{1\})$. From a tiling point of view, the problem of determining whether a subgroup $H$ of $G$ is a subgroup perfect code of $G$ is the one of determining whether there exists an inverse-closed subset $A$ of $G$ with $1 \in A$ such that $(A, H)$ is a tiling of $G$. Since $H$ is a subgroup of $G$, such a tiling $(A, H)$ takes the role of lattice tilings of $\ZZZ^n$, say, with each tile a copy of $A$. Requiring $A$ to be inverse-closed ensures that the underlying Cayley graph $\Cay(G, A \setminus \{1\})$ is undirected. This requirement also makes our problem interesting and challenging. In fact, without this condition any subgroup $H$ of $G$ would be a ``perfect code" in the (not necessarily undirected) Cayley graph $\Cay(G, T \setminus \{1\})$, where $T$ is any left transversal of $H$ in $G$ which contains $1$.

In \cite{HXZ18}, Huang et al. obtained among others a necessary and sufficient condition for a normal subgroup of a group to be a perfect code (see \cite[Theorem 2.2]{HXZ18} which is presented below as Lemma \ref{HXZ}) and determined all subgroup perfect codes of all dihedral groups and some Abelian groups. In \cite[Theorem 1.1]{MWWZ19}, Ma et al. proved that a group has the property that every proper subgroup is a perfect code if and only if the group has no elements of order $4$. In particular, every group of odd order has this property, and an Abelian group has this property if and only if it is isomorphic to the product of an elementary Abelian $2$-group and an Abelian group of odd order. In \cite[Theorem 1.5]{MWWZ19}, it was proved that a proper subgroup $H$ of an Abelian group $G$ is a perfect code of $G$ if and only if the Sylow $2$-subgroup of $H$ is a perfect code of the Sylow $2$-subgroup of $G$. All subgroup perfect codes of generalized quaternion groups have also been determined in \cite[Theorem 1.7]{MWWZ19}.

\medskip
\textbf{Main results.}~
In this paper we first prove several general results on subgroup perfect codes in Section \ref{sec:gen}, including (partial) generalizations of Theorem 2.2(a) and Corollary 2.3(a) in \cite{HXZ18}. We obtain some necessary and sufficient conditions for a subgroup to be a subgroup perfect codes (see Theorems \ref{basic} and \ref{normal} and Corollary \ref{equivalent}), and prove that any subgroup with odd order or odd index is a perfect code of the group (see Theorem \ref{odd}). We further prove that the property of being a subgroup perfect code is carried over when taking quotients by normal subgroups (see Theorem \ref{quotient}) and that under certain conditions a subgroup of a subgroup perfect code is also a subgroup perfect code (see Theorem \ref{commutator}).

In Section \ref{sec:2subgp}, we investigate the role played by 2-subgroups and Sylow 2-subgroups in the study of subgroup perfect codes and obtain a few results in this line of research (see Theorems \ref{sylow}--\ref{QK}). In Section \ref{sec:classes}, we obtain a necessary and sufficient condition for a subgroup of a generalized dihedral group or a nilpotent group to be a perfect code of the group; see Theorems \ref{gen-dih} and \ref{nilp}, the former being a generalization of \cite[Theorem 2.11(a)]{HXZ18}. We also obtain a necessary and sufficient condition for a normal subgroup of a metabelian group to be a subgroup perfect code (see Theorem \ref{ma}). Finally, we prove that for any $2$-group either every element not in its Frattini subgroup generates a (cyclic) subgroup perfect code or the $2$-group admits a generalized quaternion subgroup as a perfect code (see Theorem \ref{2gp}).

A \emph{total perfect code} \cite{Zhou2016} in a graph $\Ga$, also known as an efficient open dominating set \cite{HHS} of $\Ga$, is a subset $C$ of $V(\Ga)$ such that every vertex of $\Ga$ is adjacent to exactly one vertex in $C$. A subgroup of a group $G$ which is a total perfect code in some Cayley graph of $G$ is called \cite{HXZ18} a {\em subgroup total perfect code} of $G$. Using the well-known result that any group of even order contains at least one involution, one can verify that a subgroup perfect code of a group is a total perfect code of the group if and only if it is of even order. Based on this observation one can see that all results in this paper are also true if we replace the phrases ``perfect code" and ``subgroup perfect code" by ``total perfect code" and ``subgroup total perfect code", respectively, and add the condition that the subgroup under consideration is of even order.

\section{Lemmas}

This short section containing four lemmas is a preparation for later sections. Given a group $G$ and a subgroup $H$ of $G$, we call a subset of $G$ a \emph{Cayley transversal} of $H$ in $G$ if it is a right transversal of $H$ in $G$ which is closed under taking inverse elements. Note that we can replace ``right transversal" by ``left transversal" in this definition as an inverse-closed subset of $G$ is a right transversal of $H$ in $G$ if and only if it is a left transversal of $H$ in $G$. The following lemma follows immediately from the definition of a subgroup perfect code.

\begin{lem}
\label{Ct}
Let $G$ be a group and $H$ a subgroup of $G$. Then $H$ is a perfect code of $G$ if and only if it has a Cayley transversal in $G$.
\end{lem}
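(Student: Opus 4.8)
The plan is to unwind the definition of a perfect code in a Cayley graph and read off a description purely in terms of left cosets of $H$, after which the correspondence with a Cayley transversal becomes a matter of bookkeeping. Throughout I will use the observation recorded in the definition of a Cayley transversal, namely that for an inverse-closed set being a right transversal of $H$ in $G$ is the same as being a left transversal, so that I may work with left cosets $gH$ freely.

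First I would fix an inverse-closed $S \subseteq G \setminus \{1\}$ and translate ``$H$ is a perfect code in $\Cay(G,S)$'' into two conditions. Independence of $H$ says that for distinct $h_1, h_2 \in H$ the element $h_2 h_1^{-1}$ lies outside $S$; since these elements range over all of $H \setminus \{1\}$, this is equivalent to $S \cap H = \emptyset$. For the domination condition, given $g \notin H$ and $h \in H$, the adjacency $g \sim h$ means $hg^{-1} \in S$, equivalently $gh^{-1} \in S$ by inverse-closedness; as $h$ runs over $H$ the element $gh^{-1}$ runs bijectively over the coset $gH$, so the number of neighbours of $g$ inside $H$ equals $|gH \cap S|$. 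Hence $H$ is a perfect code in $\Cay(G,S)$ if and only if $S \cap H = \emptyset$ and $|gH \cap S| = 1$ for every nontrivial left coset $gH$.

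With this reformulation the two implications become immediate. For the forward direction, starting from such an $S$ I would set $A = S \cup \{1\}$: since $S$ meets each nontrivial coset exactly once and avoids $H$, adjoining $1$ as the representative of $H$ makes $A$ a left transversal, and $A$ is inverse-closed because $S$ is and $1^{-1} = 1$; thus $A$ is a Cayley transversal. For the converse, given a Cayley transversal $A$ I would let $a_0$ be its unique representative of the coset $H$ and put $S = A \setminus \{a_0\} = A \setminus H$; the conditions $S \cap H = \emptyset$ and $|gH \cap S| = 1$ then hold because $A$ is a transversal, so $H$ is a perfect code in $\Cay(G,S)$.

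The one point requiring care — and the only genuine subtlety — is the treatment of the identity. A Cayley transversal need not contain $1$: its representative $a_0$ of the coset $H$ satisfies only $a_0^2 = 1$ (since $a_0^{-1}$ also lies in $A \cap H$, forcing $a_0^{-1} = a_0$), so $a_0$ may be an involution rather than $1$. I must therefore verify in the converse that $1 \notin S$ in both cases (if $a_0 = 1$ it is the element removed; if $a_0 \neq 1$ then $1 \notin A$, as $a_0$ is the sole element of $A \cap H$) and check that $S = A \setminus \{a_0\}$ remains inverse-closed, which holds precisely because the removed element $a_0$ is its own inverse. Once this bookkeeping is settled, the equivalence follows directly from the reformulation above.
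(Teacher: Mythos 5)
Your proof is correct and is exactly the routine verification the paper leaves implicit (the paper states this lemma as following ``immediately from the definition'' and gives no written proof): translating the perfect-code condition for $H$ in $\Cay(G,S)$ into $S \cap H = \emptyset$ together with $|gH \cap S| = 1$ for each nontrivial coset, and passing between $S$ and the transversal $A = S \cup \{1\}$. Your care with the coset-$H$ representative $a_0$ --- noting it need not be $1$ but must satisfy $a_0^2 = 1$, so that $S = A \setminus \{a_0\}$ stays inverse-closed and avoids $1$ --- is precisely the small point the paper's ``immediately'' glosses over, and you handle it correctly.
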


\begin{lem}
\label{sub}
Let $G$ be a group and $H$ a subgroup of $G$. Then $H$ is a perfect code of $G$ if and only if it is a perfect code of any subgroup of $G$ which contains $H$.
\end{lem}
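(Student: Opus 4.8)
The plan is to reduce the whole statement to Lemma~\ref{Ct}, which recasts ``$H$ is a perfect code of a group'' as ``$H$ possesses a Cayley transversal (an inverse-closed right transversal) in that group.'' Read with its universal quantifier, the assertion is that $H$ is a perfect code of $G$ exactly when $H$ is a perfect code of every intermediate subgroup $K$ with $H \le K \le G$. One direction comes for free: since $G$ itself is such an intermediate subgroup, if $H$ is a perfect code of every such $K$ then in particular it is a perfect code of $G$. So the real content lies in the forward implication.

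For that direction I would start from a Cayley transversal $T$ of $H$ in $G$, guaranteed by Lemma~\ref{Ct}, and propose $T \cap K$ as a Cayley transversal of $H$ in an arbitrary intermediate subgroup $K$. That $T \cap K$ is inverse-closed is immediate, since both $T$ and $K$ are closed under inversion.

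The heart of the argument is to show $T \cap K$ is a right transversal of $H$ in $K$, that is, that it meets each right coset $Hk$ with $k \in K$ exactly once. The idea is that every right coset of $H$ lying in $K$ is also a right coset of $H$ in $G$, hence contains a unique representative $t \in T$; the step requiring care is verifying that this representative actually belongs to $K$. This follows from $H \le K$: writing $t = hk$ with $h \in H$ and $k \in K$ forces $t \in K$, so $t \in T \cap K$, while uniqueness in $K$ is inherited from uniqueness in $G$. Two applications of Lemma~\ref{Ct} then complete the proof.

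The only genuine obstacle is this containment check that the $G$-representative of a coset $Hk$ returns to $K$; everything else is routine bookkeeping, and the containment is precisely where the hypothesis $H \le K$ is used.
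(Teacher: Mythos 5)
Your proposal is correct and takes essentially the same route as the paper: both directions reduce to Lemma~\ref{Ct}, and the forward implication is proved by intersecting a Cayley transversal $T$ of $H$ in $G$ with $K$, using $H \le K$ to see that the $T$-representative of each coset $Hk$ lies in $K$. The paper packages your coset-representative check as the single set identity $K = XH \cap K = H(X \cap K)$, but this is the same argument.
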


\begin{proof}
It suffices to prove the necessity. Let $H$ be a subgroup perfect code of $G$ and $K$ an arbitrary subgroup of $G$ which contains $H$. By Lemma \ref{Ct}, $H$ has a Cayley transversal in $G$, say, $X$. So $(X, H)$ is a factorization of $G$. Set $Y=X \cap K$. Since both $X$ and $K$ are inverse-closed, so is $Y$. We have $K=G\cap K = XH \cap K = HX \cap K= H (X \cap K) = HY = YH$. Hence $(Y, H)$ is a factorization of $K$ and $Y$ is Cayley transversal of $H$ in $K$. Therefore, by Lemma \ref{Ct}, $H$ is a perfect code of $K$.
\end{proof}

The following lemma can be easily proved.

\begin{lem}
\label{conjugate}
Let $G$ be a group and $H$ a subgroup of $G$. If $H$ is a perfect code of $G$, then for any $g \in G$, $g^{-1}Hg$ is a perfect code of $G$. More specifically, if $H$ is a perfect code in $\Cay(G, S)$ for some connection set $S$ of $G$, then $g^{-1}Hg$ is a perfect code in $\Cay(G, g^{-1}Sg)$.
\end{lem}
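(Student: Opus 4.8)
The plan is to exhibit a graph isomorphism induced by the conjugation automorphism and then invoke the elementary fact that a graph isomorphism carries perfect codes to perfect codes. Fix $g \in G$ and let $\phi \colon G \to G$ be the inner automorphism $\phi(x) = g^{-1}xg$. First I would check that $g^{-1}Sg = \phi(S)$ is again a legitimate connection set: since $\phi$ is a bijection fixing $1$ and $1 \notin S$, we have $1 \notin \phi(S)$; and since $\phi$ is a homomorphism, $\phi(S)^{-1} = \phi(S^{-1}) = \phi(S) $, so $\phi(S)$ is inverse-closed. Hence $\Cay(G, g^{-1}Sg)$ is a well-defined (undirected) Cayley graph.

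Next I would verify that $\phi$ is a graph isomorphism from $\Cay(G, S)$ to $\Cay(G, g^{-1}Sg)$. For $x, y \in G$, the two vertices are adjacent in $\Cay(G, S)$ if and only if $yx^{-1} \in S$; applying $\phi$ and using that it is a homomorphism gives $\phi(y)\phi(x)^{-1} = \phi(yx^{-1})$, which lies in $\phi(S) = g^{-1}Sg$ if and only if $yx^{-1} \in S$. Thus $x$ and $y$ are adjacent in $\Cay(G, S)$ exactly when $\phi(x)$ and $\phi(y)$ are adjacent in $\Cay(G, g^{-1}Sg)$, so the bijection $\phi$ preserves adjacency in both directions.

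Finally, because a graph isomorphism maps independent sets to independent sets and preserves the property of being adjacent to exactly one vertex of a given set, it carries any perfect code to a perfect code of the image graph. Applying this to the perfect code $H$ of $\Cay(G, S)$ yields that $\phi(H) = g^{-1}Hg$ is a perfect code of $\Cay(G, g^{-1}Sg)$, which is the ``more specifically'' assertion; the first statement of the lemma is then immediate. I do not expect a serious obstacle here: the content is just the routine bookkeeping that conjugation respects the Cayley-graph adjacency relation and that perfect codes are an isomorphism invariant. (Alternatively, one could argue entirely within the group: by Lemma~\ref{Ct}, $H$ has a Cayley transversal $X$, and then $g^{-1}Xg$ is inverse-closed with $G = g^{-1}(XH)g = (g^{-1}Xg)(g^{-1}Hg)$ a factorization, so $g^{-1}Xg$ is a Cayley transversal of $g^{-1}Hg$; this gives the first statement directly but does not track the specific connection set demanded by the refined claim.)
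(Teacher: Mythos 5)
Your proof is correct; the paper in fact omits the argument entirely (stating only that the lemma ``can be easily proved''), and your isomorphism-via-inner-automorphism argument is precisely the intended routine verification, including the needed checks that $g^{-1}Sg$ is an inverse-closed subset of $G\setminus\{1\}$ and that $\phi$ preserves adjacency in both directions. Your alternative transversal argument via Lemma~\ref{Ct} is also valid for the first assertion, as you correctly note.
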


The next lemma is taken from \cite{HXZ18}.

\begin{lem}
\label{HXZ}
(\cite[Theorem 2.2]{HXZ18}) Let $G$ be a group and $H$ a normal subgroup of $G$. Then $H$ is a perfect code of $G$ if and only if for all $x\in G$, $x^{2}\in H$ implies $(xh)^{2}=1$ for some $h\in H$.
\end{lem}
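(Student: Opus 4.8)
The plan is to apply Lemma~\ref{Ct} and then analyze how the cosets of $H$ behave under inversion. Since $H$ is normal, $G/H$ is a group and the assignment $gH \mapsto (gH)^{-1} = g^{-1}H$ is a well-defined involution on the set of cosets; thus the cosets split into inverse-pairs $\{gH, g^{-1}H\}$ with $gH \neq g^{-1}H$ together with self-inverse cosets, a coset $xH$ being self-inverse precisely when $x^{-1}H = xH$, i.e.\ when $x^{2}\in H$. A Cayley transversal $X$ chooses exactly one representative from each coset and is closed under inverses, and the whole difficulty concentrates on the self-inverse cosets.

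For the necessity, suppose $H$ is a perfect code, so by Lemma~\ref{Ct} it has a Cayley transversal $X$, and let $x\in G$ satisfy $x^{2}\in H$, so that $xH$ is self-inverse. Let $y$ be the unique element of $X$ lying in $xH$. Since $X$ is inverse-closed, $y^{-1}\in X$; and since $y\in xH$ and $H$ is normal, $y^{-1}\in (xH)^{-1}=x^{-1}H=xH$. As $X$ meets $xH$ in the single element $y$, we must have $y^{-1}=y$, that is $y^{2}=1$. Writing $y=xh$ with $h\in H$ then yields $(xh)^{2}=1$, as required.

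For the sufficiency, I would construct a Cayley transversal directly. Partition the cosets of $H$ into the self-inverse ones and the inverse-pairs $\{gH, g^{-1}H\}$ with $gH\neq g^{-1}H$. From each inverse-pair pick any $c\in gH$ and place both $c$ and $c^{-1}$ into $X$; these represent $gH$ and $g^{-1}H$ respectively. For each self-inverse coset $xH$ (so $x^{2}\in H$), the hypothesis supplies some $h\in H$ with $(xh)^{2}=1$; put $y=xh$ into $X$, noting that $y^{-1}=y\in xH$, so this single choice is compatible with inverse-closedness (the identity coset $H$ is handled by the choice $y=1$). By construction $X$ contains exactly one element from each coset and is closed under taking inverses, hence is a Cayley transversal of $H$, and Lemma~\ref{Ct} gives that $H$ is a perfect code of $G$.

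I expect the only real subtlety to be the bookkeeping in the self-inverse case: recognizing that an inverse-closed transversal is forced to represent every self-inverse coset by an involution or the identity, which is exactly the translation of ``$y^{-1}=y$'' into ``$(xh)^{2}=1$''. Normality of $H$ is used essentially to guarantee that inversion acts on the set of cosets, so that the pairing argument is valid; without normality the cosets $gH$ and $g^{-1}H$ need not be related and this reduction would break down.
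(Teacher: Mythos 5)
Your proof is correct. Note that the paper does not actually prove Lemma~\ref{HXZ}: it is imported verbatim from \cite[Theorem 2.2]{HXZ18}, so there is no in-paper argument to compare against; measured against the cited source, your argument is essentially the standard one. The structure is exactly right given the paper's Lemma~\ref{Ct}: normality of $H$ makes inversion a well-defined involution on the left cosets (since $(xH)^{-1}=Hx^{-1}=x^{-1}H$), a coset $xH$ is self-inverse precisely when $x^{2}\in H$, and an inverse-closed transversal is forced to meet each self-inverse coset in an element $y$ with $y^{-1}=y$, which is the necessity; conversely, the hypothesis hands you an involution (or the identity) in every self-inverse coset, and pairing the remaining cosets as $\{gH,g^{-1}H\}$ with representatives $c,c^{-1}$ (where $c^{-1}\in g^{-1}H$, again by normality) assembles a Cayley transversal, giving sufficiency via Lemma~\ref{Ct}. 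All the small checks go through: $x^{-1}H=xH$ iff $x^{2}\in H$; the identity coset is handled by choosing $1$; and distinct inverse-pairs contribute disjoint cosets. Your closing remark also correctly pinpoints where normality is indispensable --- without it $(xH)^{-1}$ is a right coset that need not equal any left coset, so the pairing of cosets under inversion breaks down, which is consistent with the paper needing the much more delicate Theorem~\ref{basic} to handle non-normal subgroups.
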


\section{Some general results on subgroup perfect codes}
\label{sec:gen}

Our first result, Theorem \ref{basic} below, is a basic tool for proving subsequent results in the rest of this paper. Recall that an element of a group is called a \emph{$2$-element} if its order is a power of $2$. In particular, the identity element is treated as a $2$-element. As usual, denote by $|G|$ the order of a group $G$, $|G:H|$ the index in $G$ of a subgroup $H$ of $G$, and $|X|$ the cardinality of a set $X$. 

\begin{theorem}
\label{basic}
Let $G$ be a group and $H$ a subgroup of $G$. Then $H$ is a perfect code of $G$ if and only if for any $g\in G\setminus H$ either the left coset $gH$ contains an involution or the integer $|H\{g,g^{-1}\}H|/|H|$ is even. In particular, if $H$ is not a perfect code of $G$, then there exists a $2$-element $x\in G\setminus H$ such that $x^{2}\in H$, $|H:H\cap xHx^{-1}|$ is odd, and $xH$ contains no involution.
\end{theorem}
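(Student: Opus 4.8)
The plan is to use Lemma~\ref{Ct} to reduce the problem to the existence of a Cayley transversal, i.e.\ an inverse-closed left transversal $T$ of $H$ in $G$, and then to build (or obstruct) such a $T$ one ``inversion-paired'' double coset at a time. Since inversion sends the double coset $HgH$ to $Hg^{-1}H$, each set $W_g:=H\{g,g^{-1}\}H=HgH\cup Hg^{-1}H$ satisfies $W_g^{-1}=W_g$, so an inverse-closed subset $T$ is a left transversal of $H$ if and only if, for every $g\in G\setminus H$, the set $T\cap W_g$ is an inverse-closed left transversal of the left cosets contained in $W_g$. Thus $H$ is a perfect code if and only if each $W_g$ admits such an inverse-closed left transversal, and the whole theorem reduces to a single-$W_g$ statement. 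The crucial observation is that, in choosing one representative from each left coset of $W_g$ with the chosen set closed under inversion, every representative is either an involution (which covers its own left coset) or is paired with its inverse lying in a \emph{different} left coset (the two covering each other); a representative $x$ with $x^{-1}$ in the \emph{same} coset but $x\neq x^{-1}$ is forbidden, since then both $x$ and $x^{-1}$ would be forced into one coset.

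Two structural facts drive the argument, and proving them cleanly is where the main work (and the subtlety that invalidated the original proof) lies. First, the cross-pairing relation on left cosets is as rich as possible: whenever $L=aH$ and $L'=bH$ are distinct left cosets of $W_g$ lying either in the two different double cosets $HgH,Hg^{-1}H$ or both in a single self-paired double coset, there is some $x\in L$ with $x^{-1}\in L'$. Indeed $a^{-1}\in Hg^{-1}H=HbH$ lets us write $a^{-1}=hbh'$, whence $x=ah$ has $x^{-1}=bh'\in L'$. So the ``domino graph'' whose vertices are the left cosets of $W_g$ and whose edges record such cross-pairs is a complete graph $K_k$ when $HgH=Hg^{-1}H$ (with $k=|H:H\cap gHg^{-1}|=|H\{g,g^{-1}\}H|/|H|$) and a complete bipartite graph $K_{k,k}$ when $HgH\neq Hg^{-1}H$. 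Second, involutions are distributed in an all-or-nothing fashion: if $HgH=Hg^{-1}H$ and one coset $aH$ contains an involution $t$, then, as left multiplication by $H$ is transitive on the left cosets of the double coset, every other coset has the form $h_0aH$ and hence contains the involution $h_0th_0^{-1}$; in the distinct case $W_g$ has no involution at all, since one would lie in $HgH\cap Hg^{-1}H=\emptyset$.

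Granting these facts, the conclusion is a short matching count: an inverse-closed left transversal of $W_g$ is exactly a perfect matching of the domino graph in which a vertex whose coset contains an involution may also be matched to itself. For $HgH\neq Hg^{-1}H$ the graph is $K_{k,k}$, which always has a perfect matching, consistent with $|H\{g,g^{-1}\}H|/|H|=2k$ being even. For $HgH=Hg^{-1}H$ the graph is $K_k$ with loops present at all vertices or none: if $k$ is even, $K_k$ has a perfect matching needing no loop; if $k$ is odd, a perfect matching must use at least one loop, so it exists precisely when the cosets contain involutions, i.e.\ precisely when $gH$ contains an involution. Combining the cases shows that $W_g$ admits an inverse-closed left transversal if and only if $gH$ contains an involution or $|H\{g,g^{-1}\}H|/|H|$ is even, which is exactly the stated criterion.

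For the ``in particular'' clause, suppose $H$ is not a perfect code, so by the criterion there is $g\in G\setminus H$ with $gH$ containing no involution and $k=|H\{g,g^{-1}\}H|/|H|$ odd; by the above $HgH=Hg^{-1}H$, $k=|H:H\cap gHg^{-1}|$, and no left coset of $HgH$ contains an involution. Self-pairedness gives $h_1,h_2\in H$ with $g^{-1}=h_1gh_2$, equivalently $h_2gh_1=g^{-1}$, and then a direct computation yields $x_0:=gh_2$ with $x_0^2=g(h_2g)h_2=gg^{-1}h_1^{-1}h_2=h_1^{-1}h_2\in H$. Writing $|x_0|=2^am$ with $m$ odd and setting $x:=x_0^{m}$, the element $x$ is a $2$-element with $x^2=(x_0^2)^m\in H$; since $x_0^2\in H$ and $m$ is odd we get $xH=x_0H=gH$, so $x\notin H$, the coset $xH$ contains no involution, and $|H:H\cap xHx^{-1}|=k$ is odd because $x\in HgH$. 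The main obstacle throughout is establishing the completeness of the domino graph and the all-or-nothing distribution of involutions; these are exactly the points where one must resist the tempting but false assumption that inverting a chosen set of left-coset representatives again gives representatives of distinct left cosets.
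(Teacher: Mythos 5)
Your proposal is correct, and it reaches Theorem \ref{basic} by a genuinely different route from the paper's. The paper proves sufficiency by a maximality argument: it takes a maximal inverse-closed set $T$ with $1\in T$, $HTH=TH$ and pairwise distinct cosets, and, if $T$ fails to be a transversal, extends it by an explicit block of representatives for a whole double coset (three cases matching your three matching types), the invariant $HTH=TH$ being precisely the device that repairs the error in the originally published proof (a new representative's inverse colliding with an old coset). You instead partition $G\setminus H$ into the inverse-closed sets $W_g=HgH\cup Hg^{-1}H$, observe that an inverse-closed transversal exists if and only if each $W_g$ independently admits an inverse-closed system of left-coset representatives, and characterize the latter as a perfect matching (loops allowed only at involution-bearing cosets) in a ``domino graph'' that you show is $K_k$ or $K_{k,k}$; your two structural lemmas -- cross-pairing via $a^{-1}=hbh'$, $x=ah$, and the all-or-nothing distribution of involutions via conjugation by $h_0$ -- are exactly the computations the paper uses inside its necessity proof and its extension step, but your global architecture is different and, by design, immune to the collision pitfall, since choices in distinct $W_g$'s cannot interfere. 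The necessity directions are essentially the same parity argument (an inverse-closed set of odd size contains an involution, in your language: a matching of an odd number of vertices needs a loop), and your derivation of the ``in particular'' clause ($x_0=gh_2$, $x_0^2=h_1^{-1}h_2\in H$, then passing to the $2$-part $x=x_0^m$) matches the paper's almost verbatim. What your approach buys is a transparent local-to-global equivalence showing the criterion is an exact per-double-coset obstruction; what the paper's buys is brevity, at the cost of the less illuminating ``it can be verified'' check of the maximality contradiction. I verified your key steps ($x=ah$ has $x^{-1}=bh'\in bH$; $h_0th_0^{-1}\in h_0aH$ since $th_0^{-1}\in tH=aH$; $m$ odd forces $HgH=Hg^{-1}H$; $|H:H\cap xHx^{-1}|=k$ since $HxH=HgH$) and found no gaps; the only glossed trivialities are that the coset $H$ itself is represented by $1$ and that $k$ counts left cosets of $H$ in $HgH$, the standard index formula the paper proves inline.
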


\begin{proof}
We prove the necessity first. Suppose that $H$ is a perfect code of $G$. By Lemma \ref{Ct}, $H$ has a Cayley transversal $T$ in $G$. Consider an arbitrary element $g\in G\setminus H$. Obviously,  $H\{g,g^{-1}\}H$ is the union of some left cosets of $H$ in $G$. So $m = |H\{g,g^{-1}\}H|/|H|$ is a positive integer, and $H\{g,g^{-1}\}H$ is the union of $m$ distinct left cosets of $H$ in $G$, say, $g_{1}H, \ldots, g_{m}H$, where $g_{1},\ldots,g_{m}\in G$.
It suffices to show that $gH$ contains an involution when $m$ is odd. In fact, since $T$ is a left transversal of $H$ in $G$, for each $1\leq i\leq m$, there exists $x_i \in G$ such that $T\cap g_{i}H = \{x_{i}\}$. Set $X=\{x_1,x_2,\ldots,x_m\}$. Since $T$ is inverse-closed, we have
$X^{-1}=(T\cap H\{g,g^{-1}\}H)^{-1}=T^{-1}\cap (H\{g,g^{-1}\}H)^{-1}=T\cap H\{g,g^{-1}\}H=X$.
Therefore, if $m$ is odd, then $X$ contains at least one involution, say, $x$. Since $x\in H\{g,g^{-1}\}H$ and $x^{-1}=x$, we have $x\in HgH$. So $x=h_{1}gh_{2}$ for some $h_{1},h_{2}\in H$. Since $g=h_{1}^{-1}xh_{2}^{-1}$, it follows that $h_{1}^{-1}xh_{1}$ is an involution contained in $gH$.

Now we prove the sufficiency. Assume that for any $g\in G\setminus H$ either $gH$ contains an involution or $m = |H\{g,g^{-1}\}H|/|H|$ is even. Take a subset $T$ of $G$ with maximum cardinality such that $1\in T$, $T^{-1}=T$, $HTH=TH$ and $xH\neq yH$ for all pairs of distinct elements $x,y\in T$. (The existence of $T$ follows from the fact that there are subsets of $G$, say, $\{1\}$, with all these properties.)
By Lemma \ref{Ct}, it suffices to show that $T$ is a left transversal of $H$ in $G$. Suppose otherwise. Then $G \setminus TH \neq\emptyset$ and therefore we can take an element $g\in G \setminus TH$. Since $1\in T$, we have $g \in G\setminus H$. Since $HTH=TH$ and $g\notin TH$, we have $HgH\cap TH=\emptyset$. Furthermore, since $T=T^{-1}$, we have $Hg^{-1}H\cap TH=Hg^{-1}H\cap HTH=(HgH\cap HTH)^{-1}=\emptyset$.
Set $|H:H\cap gHg^{-1}|=\ell$. Since $H\cap g^{-1}Hg=g^{-1}(H\cap gHg^{-1})g$, we have $|H:H\cap g^{-1}Hg|=\ell$. It is straightforward to verify that $h_{1}gH=h_{2}gH$ if and only if $h_{1}(H\cap gHg^{-1})=h_{2}(H\cap gHg^{-1})$ for any pair of elements $h_{1},h_{2}\in H$. Therefore, $HgH$ is the union of $\ell$ distinct left cosets of $H$ in $G$. So we can express $HgH$ as the union of $\ell$ distinct left cosets $x_{1}gH,\ldots,x_{\ell}gH$ for some $x_{1},\ldots,x_{\ell} \in H$. Similarly, we can write $Hg^{-1}H$ as the union of $\ell$ distinct left cosets $y_{1}g^{-1}H,\ldots,y_{\ell}g^{-1}H$ for some $y_{1},\ldots,y_{\ell}\in H$.  
If $gH$ contains an involution $z$, then we set 
$$
X=T\cup\{x_{1}zx_{1}^{-1},\ldots,x_{\ell}zx_{\ell}^{-1}\}.
$$
If $HgH\cap Hg^{-1}H=\emptyset$, then we set
\begin{equation*}
X=T\cup\{x_{1}gy_{1}^{-1},\ldots,x_{\ell}gy_{\ell}^{-1},y_{1}g^{-1}x_{1}^{-1},\ldots,y_{\ell}g^{-1}x_{\ell}^{-1}\}.
\end{equation*}
If $gH$ contains no involution and $HgH\cap Hg^{-1}H\neq\emptyset$, then $g^{-1}H=hgH$ for some $h\in H$, and by our assumption, $H\{g,g^{-1}\}H$ is the union of $m$ distinct left cosets of $H$ in $G$. Hence $HgH=Hg^{-1}H=H\{g,g^{-1}\}H$ and $\ell = m$. Therefore, $\ell$ is even and we can set
\begin{equation*}
X=T\cup \left\{x_{1}ghx_{\frac{\ell}{2}+1}^{-1},\ldots,x_{\frac{\ell}{2}}ghx_{\ell}^{-1},
x_{\frac{\ell}{2}+1}h^{-1}g^{-1}x_{1}^{-1},\ldots,x_{\ell}h^{-1}g^{-1}x_{\frac{\ell}{2}}^{-1}\right\}.
\end{equation*}
In each case above, we have defined a subset $X$ of $G$ which contains $T$ as a proper subset. It can be verified that $X^{-1} = X$, $HXH=XH$ and $xH\neq yH$ for any pair of distinct elements $x,y\in X$, but this contradicts the maximality of $T$. This contradiction shows that $T$ must be a left transversal of $H$ in $G$. The sufficiency then follows from Lemma \ref{Ct}.

It remains to prove the last statement in the theorem. Suppose that $H$ is not a perfect code of $G$. Then by what we have proved above there exists $g\in G\setminus H$ such that $Hg^{-1}H=HgH$, $|H:H\cap gHg^{-1}|$ is odd, and $gH$ contains no involution. Since $Hg^{-1}H=HgH$, we have $Hg^{-1}=Hgh$ for some $h\in H$. Set $y=gh$. Then $y\in G\setminus H$ and $y^2\in H$. Let $s$ be the largest odd divisor of the order of $y$. Set $x=y^{s}$. Then $x^2 \in H$. Since $s-1$ is even and $y^2\in H$, we have $y^{s-1}\in H$. Hence $xH=yH$ and $xHx^{-1}=yHy^{-1}=ghHh^{-1}g^{-1}=gHg^{-1}$. Note that $x \not \in H$ as $y \not \in H$. Therefore, $x \in G\setminus H$ is a $2$-element such that $x^{2}\in H$,  $|H:H\cap xHx^{-1}|$ is odd, and $xH$ contains no involution.
\end{proof}

The next result ensures that for a group with odd order testing whether a subgroup is a perfect code can be reduced to testing whether it is a perfect code of its normalizer in the group. This implies that, in theory, the study of subgroup perfect codes of groups with odd orders can be reduced to the study of ``normal subgroup perfect codes". 
   
\begin{theorem}
\label{normal}
Let $G$ be a group and $H$ a subgroup of $G$. Suppose that either $H$ is a $2$-group or at least one of $|H|$ and $|G:H|$ is odd. Then $H$ is a perfect code of $G$ if and only if $H$ is a perfect code of $N_{G}(H)$.
\end{theorem}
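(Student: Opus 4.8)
The necessity requires no hypothesis: as $N_G(H)$ is a subgroup of $G$ containing $H$, Lemma~\ref{sub} immediately gives that $H$ is a perfect code of $N_G(H)$ whenever it is one of $G$. So the entire content is the sufficiency, which I would attack in contrapositive form. Assuming $H$ is not a perfect code of $G$, the last assertion of Theorem~\ref{basic} hands me a $2$-element $x\in G\setminus H$ with $x^{2}\in H$, with $m:=|H:H\cap xHx^{-1}|$ odd, and with $xH$ containing no involution. The plan is to use each of the three hypotheses to turn this witness either into a contradiction or into an analogous witness inside $N_G(H)$.

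The two quick cases dispose of themselves. If $H$ is a $2$-group then $m$ divides $|H|$, so it is a power of $2$; being odd it equals $1$, forcing $H=H\cap xHx^{-1}$ and hence $xHx^{-1}=H$, i.e.\ $x\in N_G(H)$. Then $x\in N_G(H)\setminus H$ has $x^{2}\in H$ while $xH$ has no involution, so, as $H\trianglelefteq N_G(H)$, Lemma~\ref{HXZ} shows $H$ is not a perfect code of $N_G(H)$, which is exactly the contrapositive I want. If instead $|H|$ is odd, then $x^{2}$ is at once a $2$-element and (lying in $H$) of odd order, so $x^{2}=1$; since $x\neq1$ this makes $x$ an involution in $xH$, contradicting the choice of $x$. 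Thus for odd $|H|$ no witness exists, $H$ is automatically a perfect code of $G$, and the equivalence holds.

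The substantial case is $|G:H|$ odd, and here the equivalence reduces to the single statement that a subgroup of odd index is always a perfect code (the right-hand side being automatic, since $N_G(H)/H$ then has odd order and Lemma~\ref{HXZ} applies). I would prove this by induction on $|G|$. Suppose $H$ is not a perfect code of $G$ and take the witness $x$ above; put $D:=H\cap xHx^{-1}$ and choose $P$ to be a Sylow $2$-subgroup of $D$ containing $x^{2}$ (note $x^{2}\in D$). Since $m$ and $|G:H|$ are odd, $P$ is a Sylow $2$-subgroup of $H$ and of $G$. If $P\trianglelefteq G$ then every $2$-element of $G$, in particular $x$, lies in $P\le H$, contradicting $x\notin H$; this settles the base case.

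Otherwise $N_G(P)<G$. With $H^{*}:=N_H(P)=N_G(P)\cap H$ one checks $|N_G(P):H^{*}|$ is odd (both groups share the Sylow $2$-subgroup $P$), so by induction $H^{*}$ is a perfect code of $N_G(P)$. Now $x^{2}\in P$ normalizes $P$, which yields $x^{-1}Px\le D$; thus $P$ and $x^{-1}Px$ are Sylow $2$-subgroups of $D$ and are conjugate by some $h\in D$, and $n:=xh^{-1}\in N_G(P)$ satisfies $nH=xH$ and, because $h\in D$, also $n^{2}\in H^{*}$ with $n\notin H^{*}$. A short computation gives $nH^{*}n^{-1}=N_G(P)\cap xHx^{-1}$, whence $H^{*}\cap nH^{*}n^{-1}=N_G(P)\cap D$ has odd index in $H^{*}$, while $n^{2}\in H^{*}$ forces $H^{*}nH^{*}=H^{*}n^{-1}H^{*}$; hence $n$ satisfies the odd-count hypothesis of Theorem~\ref{basic} inside $N_G(P)$. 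As $H^{*}$ is a perfect code of $N_G(P)$, Theorem~\ref{basic} then forces $nH^{*}$ to contain an involution, which lies in $nH^{*}\subseteq xH$ and contradicts that $xH$ has no involution. The crux I expect to fight with is exactly this descent to $N_G(P)$: the single choice of $P$ inside $D$ with $x^{2}\in P$ is what simultaneously secures $x^{-1}Px\le D$, the membership $h\in D$ (hence $n^{2}\in H^{*}$), and the odd index of $N_G(P)\cap D$ in $H^{*}$, and these are precisely the ingredients that make $n$ a legitimate odd-count witness downstairs.
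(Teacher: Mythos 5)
Your proof is correct, but the sufficiency is handled quite differently from the paper. The paper's argument is uniform across all three hypotheses and needs no induction: from the witness $x$ supplied by Theorem~\ref{basic} it first deduces $x\notin N_G(H)$ via Lemma~\ref{HXZ}, then sets $L=H\cap xHx^{-1}$, observes $xLx^{-1}=xHx^{-1}\cap x^{2}Hx^{-2}=xHx^{-1}\cap H=L$ so that $x\in N_G(L)$ with $x^{2}\in L$, $x\notin L$, whence $|N_G(L):L|$ is even, hence $|G:L|$ is even, and since $|H:L|$ is odd this forces $|G:H|$ even; together with $H$ not a $2$-group (as $L<H$ has odd index $>1$) and $|H|$ even (as $x$ has order $\geq 4$ and $x^{2}\in H$), all three hypotheses are contradicted at once. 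You instead split into cases: your $2$-group and odd-$|H|$ cases are quick and sound, and for the odd-index case you prove the genuinely stronger statement that every odd-index subgroup is a perfect code of $G$, by induction on $|G|$ with a Sylow $2$-subgroup descent into $N_G(P)$ --- I verified the key steps ($x^{-1}Px\le D$, $n=xh^{-1}\in N_G(P)$ with $nH=xH$, $n^{2}\in H^{*}$, $nH^{*}n^{-1}=N_G(P)\cap xHx^{-1}$, and the odd index of $N_G(P)\cap D$ in $H^{*}$), and they all check out; you also correctly avoid the circularity that would come from citing Theorem~\ref{odd}, whose proof in the paper depends on Corollary~\ref{equivalent} and hence on this very theorem (the paper's route derives the odd-index part of Theorem~\ref{odd} afterwards, for free). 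One cosmetic point: your stated justification ``$x^{2}\in P$ normalizes $P$, which yields $x^{-1}Px\le D$'' is not quite the right reason; the correct one is that $P\le D=H\cap xHx^{-1}$ gives $x^{-1}Px\le H$, while $x^{2}\in H$ gives $x^{-2}Px^{2}\le H$, i.e.\ $x^{-1}Px\le xHx^{-1}$ --- the fact itself is true, so this is a wording slip, not a gap. In sum: your proof buys a direct, self-contained proof of the odd-index case (Frattini-style descent), at the cost of length; the paper's normalizer-of-$L$ trick is shorter and treats all hypotheses in a single stroke.
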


\begin{proof}
The necessity follows from Lemma \ref{sub}. To prove the sufficiency, we assume that $H$ is a perfect code of $N_{G}(H)$. By way of contradiction, suppose that $H$ is not a perfect code of $G$. By Theorem \ref{basic}, there exists a $2$-element $x\in G\setminus H$ such that $x^2\in H$, $|H:H\cap xHx^{-1}|$ is odd, and $xH$ contains no involution. Since $H$ is a perfect code of $N_{G}(H)$, we have $x\notin N_{G}(H)$ by Lemma \ref{HXZ}. Set $L=H\cap xHx^{-1}$. Then $L$ is a proper subgroup of $H$. Since $|H:L|$ is odd, $H$ cannot be a $2$-group. Since $xH$ contains no involution, $x$ is not an involution. Therefore, $x^2$ is a non-identity $2$-element. Since $x^2\in H$, it follows that $H$ is of even order. Since $xLx^{-1}= x(H\cap xHx^{-1})x^{-1} = xHx^{-1}\cap x^2Hx^{-2} = xHx^{-1}\cap H = L$, we have $x\in N_{G}(L)$. It follows that $|N_{G}(L):L|$ is even. Since $|N_{G}(L):L|$ is a divisor of $|G:L|$, it follows that $|G:L|$ is even. Since $|G:L|=|G:H||H:L|$ and $|H:L|$ is odd, we conclude that $|G:H|$ is even. Now we have proved that $H$ is not a $2$-group and both $|H|$ and $|G:H|$ are even. This contradicts our assumption and therefore $H$ must be a perfect code of $G$.
\end{proof}

Combining Lemma \ref{HXZ} and Theorem \ref{normal}, we obtain the following result.  

\begin{coro}\label{equivalent}
Let $G$ be a group and $H$ a subgroup of $G$. Suppose that either $H$ is a $2$-group or at least one of $|H|$ and $|G:H|$ is odd. Then $H$ is a perfect code of $G$ if and only if for any $x\in N_{G}(H)$, $x^{2}\in H$ implies $(xh)^{2}=1$ for some $h\in H$.
\end{coro}

\begin{coro}\label{equivalent2}
Let $G$ be a group and $H$ a subgroup of $G$. Suppose that either $H$ is a $2$-group or at least one of $|H|$ and $|G:H|$ is odd. Then $H$ is a perfect code of $G$ if and only if for any $2$-element $x\in N_{G}(H)$, $x^{2}\in H$ implies $(xh)^{2}=1$ for some $h\in H$.
\end{coro}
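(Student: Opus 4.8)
The plan is to deduce Corollary~\ref{equivalent2} directly from Corollary~\ref{equivalent}, since the two results share identical hypotheses and differ only in that the coset condition is imposed on all elements $x\in N_G(H)$ with $x^2\in H$ in the former, but only on the $2$-elements among them in the latter. By Corollary~\ref{equivalent} it therefore suffices to prove that the following two statements are equivalent: (C) for every $x\in N_G(H)$ with $x^2\in H$ one has $(xh)^2=1$ for some $h\in H$; and (C$'$) the same implication holds for every $2$-element $x\in N_G(H)$. The implication (C)$\Rightarrow$(C$'$) is immediate, so the whole content lies in proving (C$'$)$\Rightarrow$(C).

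My first observation would be that the conclusion ``$(xh)^2=1$ for some $h\in H$'' is a property of the coset $xH$ alone: when $x\in H$ it is trivially satisfied by taking $h=x^{-1}$, and when $x\notin H$ it says precisely that $xH$ contains an involution. Hence, to verify (C) for a given $x\in N_G(H)\setminus H$ with $x^2\in H$, it is enough to exhibit one $2$-element of $N_G(H)$ lying in the coset $xH$ and to apply (C$'$) to it.

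To produce such a $2$-element I would reuse the reduction already appearing in the proof of Theorem~\ref{basic}: let $s$ be the largest odd divisor of the order of $x$ and set $y=x^{s}$. Then $y$ is a $2$-element, $y\in N_G(H)$ because $N_G(H)$ is closed under taking powers, and $y^2=(x^2)^{s}\in H$. Moreover $s-1$ is even and $x^2\in H$, so $x^{s-1}\in H$ and consequently $yH=x^{s}H=xH$. Applying (C$'$) to $y$ gives $(yh_1)^2=1$ for some $h_1\in H$; writing $y=xh_0$ with $h_0\in H$ then yields $(x(h_0h_1))^2=1$, which is exactly what (C) demands for $x$.

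I do not anticipate a genuine obstacle here: the argument is a short coset-chasing reduction, and the only points requiring a little care are the verifications that $y$ remains in $N_G(H)$ and in the coset $xH$, together with the remark that the involution condition is a coset invariant. The substantive work has already been carried out in Corollary~\ref{equivalent} (and, upstream, in Theorem~\ref{basic}), upon which this corollary rests.
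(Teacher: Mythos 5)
Your proposal is correct and takes essentially the same route as the paper's own proof: both reduce to Corollary~\ref{equivalent} by replacing an arbitrary $x\in N_{G}(H)$ with $x^{2}\in H$ by the $2$-element $x^{s}$ (where $s$ is the largest odd divisor of the order of $x$), noting that $x^{s-1}\in H$ keeps it in the coset $xH$, and then transferring the resulting involution back to $xH$. The only differences are cosmetic (the paper names the arbitrary element $y$, sets $x=y^{s}$, and writes the final step as $(yy^{s-1}h)^{2}=(xh)^{2}=1$ instead of invoking your coset-invariance remark).
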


\begin{proof}
The necessity follows from Corollary \ref{equivalent} immediately. To prove the sufficiency, we assume that for any $2$-element $x\in N_{G}(H)$, $x^{2}\in H$ implies $(xh)^{2}=1$ for some
$h\in H$. Let $y$ be an arbitrary element of $N_{G}(H)$, and let $2^{k}s$ be the order of $y$, where $k$ is a nonnegative integer and $s \ge 1$ is an odd integer. Set $x=y^{s}$. Then $x$ is a $2$-element in $N_{G}(H)$. If $y^{2}\in H$, then $x^{2}=y^{2s}\in H$ and therefore, by our assumption,  there exists $h\in H$ such that $(xh)^{2}=1$. Since $s$ is an odd integer and $y^{2}, h\in H$, we have $y^{s-1}h\in H$. Since $(yy^{s-1}h)^{2} = (xh)^{2} = 1$, it follows from Corollary \ref{equivalent} that $H$ is a perfect code of $G$.
\end{proof}

The following is a generalization of \cite[Corollary 2.3(a)]{HXZ18} (which in turn implies the ``if" part in \cite[Theorem 3.6]{Ta13}), where the same statement was proved under the additional condition that the subgroup involved is normal. We show that the same result is true without this additional condition. In particular, we recover the known result (see \cite[Corollary 1.2]{MWWZ19}) that in any group of odd order every proper subgroup is a perfect code.

\begin{theorem}
\label{odd}
Let $G$ be a group and $H$ a subgroup of $G$. If either the order of $H$ is odd or the index of $H$ in $G$ is odd, then $H$ is a perfect code of $G$.
\end{theorem}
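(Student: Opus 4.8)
The plan is to derive Theorem \ref{odd} as an easy consequence of the machinery already established, specifically Corollary \ref{equivalent2} (or equivalently Corollary \ref{equivalent}). The crucial observation is that the hypothesis of Theorem \ref{odd}---that either $|H|$ or $|G:H|$ is odd---is precisely one of the two cases in which Corollary \ref{equivalent2} applies. So I would first invoke Corollary \ref{equivalent2}, which reduces the problem to verifying the following purely local condition: for every $2$-element $x \in N_G(H)$ with $x^2 \in H$, there exists $h \in H$ such that $(xh)^2 = 1$. My entire task is then to check this condition under the two odd-order hypotheses.

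First I would dispose of the case where $|H|$ is odd. Let $x \in N_G(H)$ be a $2$-element with $x^2 \in H$. Since $H$ has odd order and $x^2 \in H$, the element $x^2$ has odd order; but $x$ is a $2$-element, so $x^2$ is also a $2$-element, forcing $x^2 = 1$. Taking $h = 1$ gives $(xh)^2 = x^2 = 1$ immediately, so the condition holds and $H$ is a perfect code of $G$.

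Next I would handle the case where $|G:H|$ is odd. Here the argument is slightly more involved because $H$ itself may have even order. Again let $x \in N_G(H)$ be a $2$-element with $x^2 \in H$. The key point is to consider the image $\bar{x}$ of $x$ in the quotient-like action, or more concretely to track the order of $x$ modulo $H$. Since $x \in N_G(H)$, the coset $xH$ is a well-defined element of the factor group $N_G(H)/H$, and the order of $xH$ in $N_G(H)/H$ divides $|N_G(H):H|$, which in turn divides $|G:H|$ and is therefore odd. On the other hand, the order of $xH$ divides the order of $x$, which is a power of $2$. A number that is simultaneously odd and a power of $2$ must equal $1$, so $xH = H$, i.e.\ $x \in H$. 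Then $h = x^{-1} \in H$ gives $(xh)^2 = 1$, confirming the condition.

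The steps are all short and the reasoning is elementary, so I do not anticipate a genuine obstacle; the only thing to be careful about is invoking the correct reduction. The main point worth double-checking is that Corollary \ref{equivalent2} really applies, which requires that one of $|H|$, $|G:H|$ be odd (or that $H$ be a $2$-group)---and this is exactly the hypothesis of Theorem \ref{odd}. Thus the theorem follows by combining the two cases above with Corollary \ref{equivalent2}. As a final remark, the special case $|G|$ odd (which forces $|H|$ odd) recovers the known result that every proper subgroup of an odd-order group is a perfect code, as claimed in the text preceding the statement.
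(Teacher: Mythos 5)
Your proof is correct and follows essentially the same route as the paper: both reduce via Corollary \ref{equivalent}/\ref{equivalent2} to the local condition on $x \in N_G(H)$ with $x^2 \in H$ and verify it separately in the two cases, concluding $x \in H$ when $|G:H|$ is odd by the same coset-order argument. Your only (harmless) variation is invoking Corollary \ref{equivalent2} so that $x$ is a $2$-element, which lets you take $h=1$ in the odd-$|H|$ case instead of the paper's choice $h = x^{m-1}$ with $m$ the odd order of $x^2$.
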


\begin{proof} 
Suppose first that $|H|$ is odd. Consider an arbitrary element $x\in N_{G}(H)$ with $x^{2}\in H$. Assume that the order of $x^{2}$ is $m$. Then $m$ is an odd number. Since $x^{2}\in H$, it follows that $x^{m-1} \in H$. Since $(xx^{m-1})^{2}=1$, by Corollary \ref{equivalent} we obtain that $H$ is a perfect code of $G$.

Now suppose that $|G:H|$ is odd. Then $|N_{G}(H):H|$ is odd. Thus, for any $x\in N_{G}(H)$, $x^{2}\in H$ implies $(xh)^{2}=1$, where $h=x^{-1}\in H$. Therefore, by Corollary \ref{equivalent}, $H$ is a perfect code of $G$.
\end{proof}

\begin{rem}
In \cite[Theorem 3.6]{Ta13}, it was proved that a proper subgroup of a cyclic group is a perfect code if and only if it has an odd order or odd index. So for cyclic groups the converse of the statement in Theorem \ref{odd} is true.
Since cyclic 2-groups and generalized quaternion 2-groups have no nontrivial subgroup perfect code (see \cite[Theorems 1.6 and 1.7]{MWWZ19}), it follows from Theorem \ref{QK} (see the next section) that the converse of the statement in Theorem \ref{odd} is also true when $G$ is the direct product of a cyclic or generalized quaternion 2-group and a group of odd order. At present we do not know any other class of groups for which the converse statement in Theorem \ref{odd} is true. On the other hand, the famous binary Hamming codes show that the converse statement in Theorem \ref{odd} fails for elementary Abelian $2$-groups.
\end{rem}

The next result shows that the property of being a subgroup perfect code is inherited by quotient subgroups, and that the converse is also true when the normal subgroup involved is a perfect code.

\begin{theorem}
\label{quotient}
Let $G$ be a group, $N$ a normal subgroup of $G$, and $H$ a subgroup of $G$ which contains $N$. Then the following hold:
\begin{enumerate}[\rm (a)]
  \item if $H$ is a perfect code of $G$, then $H/N$ is a perfect code of $G/N$;
  \item if $N$ and $H/N$ are perfect codes of $G$ and $G/N$, respectively, then $H$ is a perfect code of $G$.
\end{enumerate}
\end{theorem}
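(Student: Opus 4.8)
My plan is to use the Cayley transversal characterization of Lemma \ref{Ct} throughout, translating the perfect code property into the existence of an inverse-closed transversal at each of the three levels $G$, $G/N$, and the interaction between them. For part (a), I would start with a Cayley transversal $T$ of $H$ in $G$, so $(T,H)$ is a factorization of $G$ with $T^{-1}=T$ and $1\in T$. The natural candidate is the image $\overline{T}=\{tN: t\in T\}$ under the quotient map $G\to G/N$. Since $N\leq H$, cosets of $H$ in $G$ correspond bijectively to cosets of $H/N$ in $G/N$, so $T$ meeting each left coset of $H$ exactly once should force $\overline{T}$ to meet each left coset of $H/N$ exactly once; the inverse-closedness passes to the quotient immediately because $(tN)^{-1}=t^{-1}N$ and $t^{-1}\in T$. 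The one point requiring care is that the map $t\mapsto tN$ must be injective on $T$: if $t_1N=t_2N$ with $t_1,t_2\in T$, then $t_1,t_2$ lie in the same coset of $N$, hence in the same coset of $H$, forcing $t_1=t_2$. So $\overline{T}$ is a genuine Cayley transversal of $H/N$ in $G/N$, and part (a) follows.

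For part (b), I would build a Cayley transversal of $H$ in $G$ by combining a transversal coming from $G/N$ with one coming from $N$ inside $H$. First, since $H/N$ is a perfect code of $G/N$, Lemma \ref{Ct} gives an inverse-closed left transversal $\overline{U}=\{u_iN\}$ of $H/N$ in $G/N$ with $N\in\overline{U}$; I would lift this to a set $U=\{u_i\}\subseteq G$ of coset representatives with $1\in U$. Second, since $N$ is a perfect code of $G$, Lemma \ref{sub} lets me restrict attention to any subgroup containing $N$; in particular $N$ is a perfect code of $H$, so it has a Cayley transversal $V$ in $H$ with $1\in V$ and $V^{-1}=V$. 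The natural candidate transversal for $H$ in $G$ is then the product set $UV$ (or $VU$), and the plan is to show $(UV,H)$ factorizes $G$ and that $UV$ can be arranged to be inverse-closed. Counting is automatic: $|U|=|G:H|$ and $|V|=|H:N|$, giving $|UV|=|G:H||H:N|=|G:N|\cdot\frac{|N|\cdot|H:N|}{|H|}$, i.e. exactly $|G:H|$ elements once we check $U$ already represents every $H$-coset; in fact $U$ alone should be a transversal of $H$ in $G$, so the role of $V$ needs rethinking.

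The genuine difficulty, and where I expect to spend the real effort, is reconciling inverse-closedness across the two levels. The lifted set $U$ need not be inverse-closed in $G$: we only know $u_i^{-1}N=u_jN$ for some $j$, i.e. $u_i^{-1}\in u_jN$, not $u_i^{-1}=u_j$. The correct approach is to use $N$ being a perfect code of $G$ (not merely of $H$) to ``repair'' each representative: for the pairing of cosets $u_iN$ and $u_i^{-1}N=u_jN$, I need to choose the representatives $u_i$ and $u_j$ so that $u_j=u_i^{-1}$ exactly, which requires selecting an element $n\in N$ with the right involution-type behavior. Concretely, when $u_iN$ is its own inverse coset ($u_i^{-1}\in u_iN$, so $u_i^2\in N$), Lemma \ref{HXZ} applied to $N\trianglelefteq G$ guarantees some $n\in N$ with $(u_in)^2=1$, letting me replace $u_i$ by the involution $u_in$; when $u_iN$ and $u_jN$ are a genuine pair I simply set the representative of $u_jN$ to be $(u_i)^{-1}$. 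Thus I would first construct an \emph{inverse-closed} lift $U$ using $N$'s perfect-code property via Lemma \ref{HXZ}, then take $T=U$ as the Cayley transversal of $H$ in $G$, verifying $T^{-1}=T$, $1\in T$, and that $T$ meets each left coset of $H$ exactly once (the latter because $\overline{U}$ is a transversal of $H/N$ and $N\leq H$). Applying Lemma \ref{Ct} one last time then yields that $H$ is a perfect code of $G$.
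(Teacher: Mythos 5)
Your part (a) is essentially the paper's proof: both push a Cayley transversal of $H$ through the quotient map and use $N\le H$ to see that distinct representatives land in distinct cosets of $H/N$. For part (b) you take a genuinely different route. The paper argues by contradiction via Theorem \ref{basic}: if $H$ were not a perfect code, there would be a $2$-element $x\in G\setminus H$ with $x^{2}\in H$, $|H:H\cap xHx^{-1}|$ odd, and no involution in $xH$; since $N\le H$ the odd index passes to the quotient, so Theorem \ref{basic} applied to the perfect code $H/N$ forces an involution in $(xN)(H/N)$, i.e.\ $(xa)^{2}\in N$ for some $a\in H$, and then Lemma \ref{HXZ} applied to the normal perfect code $N$ produces $b\in N$ with $(xab)^{2}=1$, contradicting the absence of involutions in $xH$. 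You instead construct the required Cayley transversal of $H$ in $G$ directly: lift an inverse-closed transversal $\overline{U}$ of $H/N$, then repair the lift along the inversion permutation of $\overline{U}$ --- for a genuine pair $\{u_iN,\,u_i^{-1}N\}$ take representatives $u_i$ and $u_i^{-1}$, and for a self-paired coset ($u_i^{2}\in N$) invoke Lemma \ref{HXZ} for $N$ to replace $u_i$ by an involution $u_in$ in the same coset --- and since $N\le H$ the repaired set still meets each left coset of $H$ exactly once. This is correct (the two cases cannot conflict: an involution representative forces a self-paired coset) and is arguably more self-contained, needing only Lemmas \ref{Ct} and \ref{HXZ} and bypassing Theorem \ref{basic} entirely; it also exhibits the transversal explicitly rather than refuting an obstruction. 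What the paper's route buys is uniformity: the same $2$-element criterion from Theorem \ref{basic} drives several other proofs in the paper, so (b) comes almost for free once that machinery is in place. Your intermediate $UV$ idea, with its muddled count, is indeed a dead end, as you yourself recognized: $U$ alone is already a transversal of $H$, and your final plan correctly discards $V$. Both proofs ultimately hinge on the same use of Lemma \ref{HXZ} for $N$ to convert an element whose square lies in $N$ into an honest involution.
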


\begin{proof}
(a) Suppose that $H$ is a perfect code of $G$. By Lemma \ref{Ct}, there exists a Cayley transversal $T$ of $H$ in $G$. By the definition of a Cayley transversal, we then have $T^{-1}=T$, $G=TH$ and $xH\neq yH$ for any pair of distinct elements $x,y\in T$. Write $T/N = \{xN: x\in T\}$. Then $(T/N)^{-1}=T/N$ and $G/N=(T/N)(H/N)$. Moreover, since $N$ is contained in $H$, we have $(xN)H/N\neq (yN)H/N$ for any pair of distinct elements $xN,yN \in T/N$. Therefore, $T/N$ is a Cayley transversal of $H/N$ in $G/N$. By Lemma \ref{Ct}, $H/N$ is a perfect code of $G/N$.

(b) Suppose to the contrary that $H$ is not a perfect code of $G$. By Theorem \ref{basic}, there exists a $2$-element $x\in G\setminus H$ such that $x^{2}\in H$, $|H:H\cap xHx^{-1}|$ is odd, and $xH$ contains no involution. Since $N$ is a normal subgroup of $G$ contained in $H$, $xN$ is a $2$-element in $(G/N)\setminus(H/N)$ and $(xN)^{2}\in H/N$. Moreover, $|H/N:(H/N)\cap (xN)(H/N)(xN)^{-1}|$ is odd as it is equal to $|H/N:(H\cap xHx^{-1})/N| = |H:H\cap xHx^{-1}|$. Since $H/N$ is a perfect code of $G/N$, by Theorem \ref{basic}, $(xN)(H/N)$ contains an involution. Therefore, $(xNaN)^{2}=N$ for some $aN\in H/N$. That is, $(xa)^{2}N=N$ and hence $(xa)^{2}\in N$. Since $N$ is a perfect code and a normal subgroup of $G$, by Lemma \ref{HXZ}, there exists $b\in N$ such that $(xab)^{2}=1$. Note that $ab\in H$. However, $xH$ contains no involution, a contradiction.
\end{proof}

As usual, for any subsets $A$ and $B$ of a group $G$, we use $[A,B]$ to denote the subgroup of $G$ generated by all commutators $[a,b] = a^{-1}b^{-1}ab$ with $a\in A$ and $b\in B$. Our last result in this section gives two sufficient conditions for a subgroup of a subgroup perfect code to be a subgroup perfect code. Note that in this result $H$ is necessarily normal in $G$, for otherwise $H$ cannot contain $[G, H]$ and therefore no subgroup of $H$ can contain $[G, H]$.

\begin{theorem}
\label{commutator}
Let $G$ be a group, $H$ a normal subgroup of $G$ and $K$ a subgroup of $H$ which contains $[G,H]$. If $H$ is a perfect code of $G$, then $K$ is a perfect code of $G$ provided that one of the following conditions holds:
\begin{enumerate}[\rm (a)]
  \item $[G,H]$ is a perfect code of $G$ and $K$ is a perfect code of $H$;
  \item $K$ is of odd index in $H$.
\end{enumerate}
\end{theorem}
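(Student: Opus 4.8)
The plan is to exploit the structural consequences of the hypothesis $[G,H] \le K$. First, $[G,K] \le [G,H] \le K$, so $K$ is normal in $G$; and $[G,H]$ is a characteristic-type commutator subgroup, hence normal in $G$ and in $H$. Second, $[G,H] \le K$ says exactly that $H/K$ is a \emph{central} subgroup of $G/K$, and likewise $H/[G,H]$ is central in $G/[G,H]$. This centrality is the engine of both parts, since it lets the square of a product split as a product of squares. Because $K$ is normal in $G$, I can test whether $K$ is a perfect code of $G$ through Lemma \ref{HXZ}: it suffices to show that every $x \in G$ with $x^2 \in K$ admits some $k \in K$ with $(xk)^2 = 1$.

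For part (b) I would argue directly. Given $x \in G$ with $x^2 \in K \le H$, the assumption that $H$ is a perfect code of $G$ together with Lemma \ref{HXZ} produces $h \in H$ with $(xh)^2 = 1$. Passing to $\overline{G} = G/K$ and using that the image $\overline{h}$ lies in the central subgroup $H/K$, the relation $(xh)^2 = 1$ becomes $\overline{x}^2\,\overline{h}^2 = \overline{1}$; since $x^2 \in K$ gives $\overline{x}^2 = \overline{1}$, this forces $\overline{h}^2 = \overline{1}$. Now the odd-index hypothesis enters: $|H/K| = |H:K|$ is odd, so the only element of $H/K$ of order dividing $2$ is the identity, whence $\overline{h} = \overline{1}$, i.e. $h \in K$. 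Thus $(xh)^2 = 1$ with $h \in K$, and Lemma \ref{HXZ} yields that $K$ is a perfect code of $G$.

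For part (a) I would instead pass to $\overline{G} = G/L$ with $L = [G,H]$ and apply Theorem \ref{quotient}. The target is to verify the hypotheses of Theorem \ref{quotient}(b) for the triple $(G, L, K)$: here $L$ is a perfect code of $G$ by assumption, so it remains to prove that $K/L$ is a perfect code of $\overline{G}$, after which Theorem \ref{quotient}(b) gives that $K$ is a perfect code of $G$. To establish the remaining fact I would first record two consequences of Theorem \ref{quotient}(a): since $H$ is a perfect code of $G$, $H/L$ is a perfect code of $\overline{G}$; and since $K$ is a perfect code of $H$, applying Theorem \ref{quotient}(a) inside $H$ (with its normal subgroup $L$) shows $K/L$ is a perfect code of $H/L$. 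Then I test $K/L \trianglelefteq \overline{G}$ via Lemma \ref{HXZ}: take $\overline{x}$ with $\overline{x}^2 \in K/L$, so $\overline{x}^2 \in H/L$. The first consequence gives $\overline{h} \in H/L$ with $(\overline{x}\,\overline{h})^2 = \overline{1}$, and centrality of $H/L$ reduces this to $\overline{h}^2 = \overline{x}^{-2} \in K/L$. Feeding $\overline{h}$ into the second consequence (Lemma \ref{HXZ} inside $H/L$) produces $\overline{k} \in K/L$ with $(\overline{h}\,\overline{k})^2 = \overline{1}$, whence $\overline{k}^2 = \overline{h}^{-2} = \overline{x}^2$; since $\overline{k}$ is central, $(\overline{x}\,\overline{k}^{-1})^2 = \overline{x}^2\,\overline{k}^{-2} = \overline{1}$ with $\overline{k}^{-1} \in K/L$, as required.

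I expect the main obstacle to be the chain of reductions in part (a): correctly lining up the two applications of Lemma \ref{HXZ}, one for $H/L$ inside $\overline{G}$ and one for $K/L$ inside $H/L$, and deploying the centrality of $H/L$ at each stage to convert squares of products into products of squares and to push the obstruction back onto $\overline{x}$. The bookkeeping of which normal subgroup sits inside which ambient group, together with the checks that $K/L \trianglelefteq H/L \trianglelefteq \overline{G}$ are all well-behaved, is where care is most needed; the centrality coming from $[G,H] \le K$ is precisely what makes the two instances of Lemma \ref{HXZ} compatible.
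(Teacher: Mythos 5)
Your proposal is correct, but it takes a genuinely different route from the paper's in both parts. For part (a), the paper stays inside $G$ and runs an explicit commutator calculation: from $(xh)^2=1$ it extracts $h^{2}=[h^{-1},x]x^{-2}\in K$, then from $(hb)^{2}=1$ with $b\in K$ it derives, through a chain of commutator identities, that $(xb^{-1})^{2}\in[G,H]$, and finally corrects by some $c\in[G,H]$ using Lemma \ref{HXZ} for $[G,H]$. Your quotient argument is the abelianized shadow of this computation: passing to $\overline{G}=G/[G,H]$ makes $H/[G,H]$ central, so each of the paper's identity manipulations collapses to ``the square of a product is the product of squares,'' and the final correction by an element of $[G,H]$ is exactly what Theorem \ref{quotient}(b) packages --- legitimately so, since Theorem \ref{quotient} precedes Theorem \ref{commutator} in the paper and its proof does not depend on it. You trade the error-prone identity bookkeeping for the routine checks that $K\trianglelefteq G$ (from $[G,K]\le[G,H]\le K$), that $H/L$ is central in $\overline{G}$ and abelian (as $[H,H]\le L$), and that both applications of Lemma \ref{HXZ} are to normal subgroups; you carry all of these out correctly. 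For part (b) the divergence is sharper: the paper is constructive, splicing a Cayley transversal $X$ of $H$ in $G$ with a Cayley transversal $Y$ of $K$ in $H$ (supplied by Theorem \ref{odd}) and reordering the mixed products via $x^{-1}y^{-1}K=y^{-1}x^{-1}K$ to produce an inverse-closed transversal of $K$ in $G$; you instead note that since $H/K$ is central in $G/K$ and of odd order, the witness $h\in H$ with $(xh)^{2}=1$ given by Lemma \ref{HXZ} for $H$ satisfies $\overline{h}^{2}=\overline{1}$ in $G/K$ and hence lies in $K$ already. Your version of (b) is shorter, bypasses Theorem \ref{odd} and the transversal combinatorics entirely, and exploits the normality of $K$ in $G$ that the hypothesis $[G,H]\le K$ hands you for free; the paper's construction has the mild compensating virtue of exhibiting the Cayley transversal, and hence the connection set of a witnessing Cayley graph, explicitly.
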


\begin{proof}
Let $G$ be a group, $H$ a normal subgroup of $G$ and $K$ a subgroup of $H$ which contains $[G,H]$.

(a) Assume that $[G,H]$ is a perfect code of $G$ and $K$ is a perfect code of $H$. Since $g^{-1}ag=[g,a^{-1}]a\in K$ for all $g\in G$ and $a\in K$, $K$ is a normal subgroup of $G$. Consider an arbitrary element $x\in G$ with $x^{2}\in K$. Then $x^{2}\in H$. Since $H$ is a normal subgroup and a perfect code of $G$, by Lemma \ref{HXZ} we have $(xh)^{2}=1$ for some $h\in H$. Since $1=(xh)^{2}=x^{2}[x,h^{-1}]h^{2}$, we get $h^{2}=[h^{-1},x]x^{-2}\in K$. Since $K$ is normal in $G$, we get that $K$ is normal in $H$. Since $K$ is a perfect code of $H$, by Lemma \ref{HXZ} we have $(hb)^{2}=1$ for some $b\in K$. Then $b^{-2}=h^{2}[h,b^{-1}]$ and therfore\begin{align*}
  (xb^{-1})^{2} & =  x^{2}[x,b]b^{-2}\\
  & = [x^{2},[x,b]][x,b]x^{2}b^{-2} \\
  & =  [x^{2},[x,b]][x,b]x^{2}h^{2}[h,b^{-1}]\\
  & = [x^{2},[x,b]][x,b][h^{-1},x][h,b^{-1}].
\end{align*}
It follows that $(xb^{-1})^{2}\in [G,H]$. Since $[G,H]$ is a normal subgroup and a perfect code of $G$, by Lemma \ref{HXZ} we have $(xb^{-1}c)^{2}=1$ for some $c\in [G,H]$. Since $K$ contains $[G,H]$, we have $c \in K$ and so $b^{-1}c\in K$. By Lemma \ref{HXZ}, $K$ is a perfect code of $H$.

(b) Assume that $K$ is of odd index in $H$. Since $H$ is a perfect code of $G$, by Lemma \ref{Ct}, $H$ has a Cayley transversal $X$ in $G$. Write $X=\{1\}\cup X_{0}\cup X_{1}\cup X_{1}^{-1}$, where $X_{0}$ consists of all involutions in $X$. Since the index of $K$ in $H$ is odd, by Theorem \ref{odd}, $K$ is a perfect code of $H$. Therefore, $K$ has a Cayley transversal $Y$ in $H$. Write $Y=\{1\}\cup Y_{1}\cup Y_{1}^{-1}$. Then
$$
XY=X\cup Y\cup X_{0}Y_{1}\cup X_{0}Y_{1}^{-1}\cup
X_{1}Y_{1}\cup X_{1}Y_{1}^{-1}\cup X_{1}^{-1}Y_{1}\cup X_{1}^{-1}Y_{1}^{-1}.
$$
Moreover, $XY$ is a transversal of $K$ in $G$. By Lemma \ref{Ct}, to complete the proof it suffices to construct a Cayley transversal of $K$ in $G$. Since $[G,H] \le K$, for any $x\in X$ and $y\in Y$, we have $xyx^{-1}y^{-1}\in K$ and so $x^{-1}y^{-1}K=y^{-1}x^{-1}K$.
Set
\begin{equation*}
T=X\cup Y\cup X_{0}Y_{1}\cup Y_{1}^{-1}X_{0}\cup
X_{1}Y_{1}\cup Y_{1}^{-1}X_{1}^{-1}\cup X_{1}Y_{1}^{-1}\cup Y_{1}X_{1}^{-1}.
\end{equation*}
Then $T$ is a Cayley transversal of $K$ in $G$ as required.
\end{proof}

An immediate corollary of Theorem \ref{commutator} is as follows.

\begin{coro}
Let $G$ be an Abelian group and $H$ a subgroup of $G$. If $H$ is a perfect code of $G$, then any subgroup perfect code of $H$ is also a perfect code of $G$.
\end{coro}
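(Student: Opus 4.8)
The plan is to deduce this directly from Theorem \ref{commutator}(a) by exploiting the fact that all commutators vanish in an Abelian group. First I would observe that since $G$ is Abelian, every subgroup of $G$ is normal, so $H$ is automatically a normal subgroup of $G$; moreover $[G,H] = \{1\}$, because each commutator $[g,h] = g^{-1}h^{-1}gh$ collapses to the identity when $g$ and $h$ commute. Consequently, for any subgroup $K$ of $H$ the containment $[G,H] = \{1\} \le K$ holds trivially, so the standing hypothesis of Theorem \ref{commutator} that $K$ contains $[G,H]$ is satisfied for free, and the applicability of the theorem never becomes an issue.

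Next, let $K$ be an arbitrary subgroup perfect code of $H$; I want to verify the two requirements of condition (a) in Theorem \ref{commutator}. The first requirement, that $[G,H]$ be a perfect code of $G$, reduces to the assertion that the trivial subgroup $\{1\}$ is a perfect code of $G$, which holds because $\{1\}$ is a perfect code in the complete graph $\Cay(G, G \setminus \{1\})$, as recorded in the introduction. The second requirement, that $K$ be a perfect code of $H$, is precisely the hypothesis that $K$ is a subgroup perfect code of $H$. With both requirements verified, together with the assumption that $H$ is a perfect code of $G$, Theorem \ref{commutator}(a) yields that $K$ is a perfect code of $G$, which is exactly the claim.

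There is essentially no real obstacle here: the whole content is recognizing that $[G,H]$ degenerates to the trivial subgroup in the Abelian setting, which simultaneously forces $K \ge [G,H]$ and renders the hypothesis ``$[G,H]$ is a perfect code of $G$'' vacuously easy. The only point worth isolating explicitly is the elementary fact that the trivial subgroup is always a perfect code, after which the corollary is an immediate specialization of Theorem \ref{commutator}(a). I would therefore not attempt any direct construction of a Cayley transversal, since routing through part (a) of the preceding theorem makes the argument a one-line verification of hypotheses.
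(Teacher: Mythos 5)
Your proof is correct and takes exactly the route the paper intends: the corollary is stated there as an immediate consequence of Theorem \ref{commutator}, and your specialization of part (a) --- observing that $H$ is normal and $[G,H]=\{1\}$ in the Abelian setting, so the containment $[G,H]\le K$ is automatic and the trivial subgroup is a perfect code of $G$ by the paper's stated convention --- supplies precisely the routine verification the paper left implicit. Nothing further is needed.
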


\section{$2$-subgroups}
\label{sec:2subgp}

A subgroup with order a power of $2$ is called a \emph{$2$-subgroup}. In this section we investigate the role played by $2$-subgroups in the study of subgroup perfect codes. The first result stated below ensures that testing whether a $2$-subgroup is a perfect code of a group can be reduced to testing whether it is a perfect code of the Sylow $2$-subgroups containing it.

\begin{theorem}
\label{sylow}
Let $G$ be a group.
\begin{enumerate}[\rm (a)]
\item Let $Q$ be a $2$-subgroup of $G$. Then $Q$ is a perfect code of $G$ if and only if it is a perfect code of every Sylow $2$-subgroup of $G$ which contains $Q$.
\item Let $P$ be a Sylow $2$-subgroup of $G$ and $Q$ a normal subgroup of $P$. Then $Q$ is a perfect code of $G$ if and only if it is a perfect code of $P$.
\end{enumerate}
\end{theorem}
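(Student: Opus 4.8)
The plan is to treat the two parts separately; in each the ``only if'' direction is immediate from Lemma \ref{sub} (every Sylow $2$-subgroup containing $Q$, and the subgroup $P$ in part (b), is a subgroup of $G$ containing $Q$), so the work lies in the ``if'' direction. In both cases I would argue by contradiction, feed the obstruction produced by the last statement of Theorem \ref{basic} into a Sylow-theoretic argument, and finish using one of Lemma \ref{HXZ}, Corollary \ref{equivalent}, or Corollary \ref{equivalent2}.

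For part (a), suppose $Q$ is a perfect code of every Sylow $2$-subgroup containing it but not a perfect code of $G$. By the last statement of Theorem \ref{basic} there is a $2$-element $x\in G\setminus Q$ with $x^2\in Q$, with $|Q:Q\cap xQx^{-1}|$ odd, and with $xQ$ containing no involution. The decisive observation—what makes the whole argument go through—is that since $Q$ is a $2$-group the index $|Q:Q\cap xQx^{-1}|$ is a power of $2$; being odd it must equal $1$, so $Q\cap xQx^{-1}=Q$ and hence $x\in N_G(Q)$. Then $Q\langle x\rangle$ is a $2$-group (as $x$ normalizes the $2$-group $Q$ and $x^2\in Q$), so it lies in some Sylow $2$-subgroup $P$ of $G$, which then contains $Q$. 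By hypothesis $Q$ is a perfect code of $P$; since $x\in N_P(Q)$ with $x^2\in Q$ and $Q$ is a $2$-group, Corollary \ref{equivalent} (applied inside $P$) yields $h\in Q$ with $(xh)^2=1$. As $x\notin Q$ forces $xh\neq 1$, the element $xh$ is an involution in $xQ$, contradicting the choice of $x$.

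For part (b), note first that $Q$ is a $2$-group (a subgroup of the $2$-group $P$) and that $P\le N_G(Q)$, so $P$ is in fact a Sylow $2$-subgroup of $N_G(Q)$. I would verify the criterion of Corollary \ref{equivalent2}: given any $2$-element $x\in N_G(Q)$ with $x^2\in Q$, I must produce $h\in Q$ with $(xh)^2=1$. Since $x$ normalizes $Q$, the group $Q\langle x\rangle$ is a $2$-subgroup of $N_G(Q)$, so by Sylow's theorem it is conjugate into $P$ by some $g\in N_G(Q)$, giving $g^{-1}xg\in P$ with $(g^{-1}xg)^2=g^{-1}x^2g\in Q$. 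Because $Q\trianglelefteq P$ and $Q$ is a perfect code of $P$, Lemma \ref{HXZ} provides $h'\in Q$ with $(g^{-1}xg\,h')^2=1$; conjugating back by $g$ and setting $h=gh'g^{-1}\in Q$ gives $(xh)^2=1$, as required. Corollary \ref{equivalent2} then shows $Q$ is a perfect code of $G$.

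The main obstacle in both parts is transporting the offending $2$-element $x$ into a Sylow $2$-subgroup where the local hypothesis can be applied. In part (a) this is resolved cleanly once one notices that oddness of a $2$-power index forces $x\in N_G(Q)$, which in turn makes $Q\langle x\rangle$ a $2$-group; in part (b) the transport instead needs Sylow conjugacy carried out inside $N_G(Q)$ (using that $P$ is Sylow there), together with the normality $Q\trianglelefteq P$ that permits the use of Lemma \ref{HXZ} after conjugating $x$ into $P$.
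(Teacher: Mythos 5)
Your proposal is correct, and it is worth separating the two parts when comparing with the paper. In part (a) you essentially reproduce the paper's argument: the paper verifies the criterion of Corollary \ref{equivalent} directly for each $x\in N_G(Q)$ with $x^2\in Q$ (such an $x$ is automatically a $2$-element, so $Q\langle x\rangle$ is a $2$-group lying in some Sylow $2$-subgroup containing $Q$), whereas you reach the same configuration by contradiction through the last statement of Theorem \ref{basic}; your observation that oddness of the $2$-power index $|Q:Q\cap xQx^{-1}|$ forces $x\in N_G(Q)$ is a genuine extra step, but one the paper's route does not need since Corollary \ref{equivalent} already confines attention to the normalizer. In part (b) your route differs more substantially: the paper combines Theorem \ref{normal} with part (a) to reduce the question to the Sylow $2$-subgroups of $N_G(Q)$, notes that these form the conjugacy class of $P$ in $N_G(Q)$ (each containing $Q=g^{-1}Qg$ because $g$ normalizes $Q$), and then invokes Lemma \ref{conjugate} to transfer the perfect-code property between conjugate Sylow subgroups --- a transport argument at the level of subgroups. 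You instead conjugate the offending element $x$ into $P$ by Sylow's theorem applied inside $N_G(Q)$ (valid because $P\le N_G(Q)$ makes $P$ a Sylow $2$-subgroup of $N_G(Q)$), apply Lemma \ref{HXZ} inside $P$ (legitimate since $Q\trianglelefteq P$), and conjugate the resulting involution back, verifying the criterion of Corollary \ref{equivalent2} element by element. Your version of (b) buys independence from part (a) and from Lemma \ref{conjugate}, at the cost of an explicit conjugation computation; the paper's version is shorter given its earlier results and makes the structural fact visible that every Sylow $2$-subgroup of $N_G(Q)$ contains $Q$. Note that the two proofs of (b) are not as far apart as they look: Corollary \ref{equivalent2}, which you rely on, is itself a consequence of Theorem \ref{normal}, so both arguments ultimately rest on the same reduction to the normalizer.
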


\begin{proof}
(a) The necessity follows directly from Lemma \ref{sub}. Now we assume that $Q$ is a perfect code of every Sylow $2$-subgroup of $G$ which contains $Q$. Consider any $x\in N_{G}(Q)$ with $x^{2}\in Q$. There exists a Sylow $2$-subgroup $P$ of $G$ which contains $x$ and $Q$. Clearly, $x\in N_{P}(Q)$. By our assumption, $Q$ is a perfect code of $P$. Thus, by Corollary \ref{equivalent}, there exists $b\in Q$ such that $(xb)^{2}=1$. Therefore, by Corollary \ref{equivalent} again, $Q$ is a perfect code of $G$.

(b) By Theorem \ref{normal} and (i), $Q$ is a perfect code of $G$ if and only if it is a perfect code of every Sylow $2$-subgroup of $N_{G}(Q)$ which contains $Q$. Since $Q$ is normal in $P$, the Sylow $2$-subgroups of $N_{G}(Q)$ form the conjugacy class of $P$ in $N_{G}(Q)$. Therefore, the result follows from Lemma \ref{conjugate} directly.
\end{proof}

The next result asserts that a subgroup is a perfect code of a group if it has a Sylow $2$-subgroup which is a perfect code of the group.

\begin{theorem}
\label{ns}
Let $G$ be a group and $H$ a subgroup of $G$. If there exists a Sylow $2$-subgroup of $H$ which is a perfect code of $G$, then $H$ is a perfect code of $G$.
\end{theorem}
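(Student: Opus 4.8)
The plan is to argue by contradiction, extracting a rigid witness from the ``in particular'' clause of Theorem \ref{basic} and then showing that this witness forces a Sylow $2$-subgroup of $H$ to fail the perfect-code condition. So suppose $H$ is not a perfect code of $G$. By Theorem \ref{basic} there is a $2$-element $x\in G\setminus H$ with $x^{2}\in H$, with $|H:H\cap xHx^{-1}|$ odd, and with $xH$ containing no involution. Write $L=H\cap xHx^{-1}$. The first key observation I would record is that $x$ normalizes $L$: since $x^{2}\in H$ we have $x^{2}Hx^{-2}=H$, and hence $xLx^{-1}=xHx^{-1}\cap x^{2}Hx^{-2}=xHx^{-1}\cap H=L$. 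I would also note that $x^{2}\in L$ (as $x^{-1}x^{2}x=x^{2}\in H$) and that, because $|H:L|$ is odd, every Sylow $2$-subgroup of $L$ is a Sylow $2$-subgroup of $H$.

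The heart of the argument, and the step I expect to be the main obstacle, is to produce a single Sylow $2$-subgroup of $H$ that is \emph{simultaneously} normalized by $x$ and contains $x^{2}$; an arbitrary Sylow subgroup need not be $x$-invariant, and without invariance one cannot pass to the normalizer. This is exactly where the normalization of $L$ pays off. Since $\langle x\rangle$ is a $2$-group normalizing $L$, it acts by conjugation on $\Syl_2(L)$, a set of odd cardinality, so some $Q_{0}\in\Syl_2(L)$ is fixed; that is, $x\in N_{G}(Q_{0})$. Then $Q_{0}\langle x\rangle$ is a $2$-group (as $\langle x\rangle$ normalizes $Q_{0}$), so $Q_{0}\langle x\rangle\cap L$ is a $2$-subgroup of $L$ containing the Sylow $2$-subgroup $Q_{0}$, whence $Q_{0}\langle x\rangle\cap L=Q_{0}$; in particular $x^{2}\in Q_{0}$. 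Thus $Q_{0}$ is a Sylow $2$-subgroup of $H$ with $x\in N_{G}(Q_{0})$ and $x^{2}\in Q_{0}$.

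Finally I would derive the contradiction. Because $Q_{0}\le H$, the coset $xQ_{0}$ is contained in $xH$ and hence contains no involution; equivalently, no $q\in Q_{0}$ satisfies $(xq)^{2}=1$ (such an element would be an involution in $xH$, since $xq=1$ is impossible as $x\notin H$). As $Q_{0}$ is a $2$-group, Corollary \ref{equivalent} applies to $Q_{0}$, and the existence of $x\in N_{G}(Q_{0})$ with $x^{2}\in Q_{0}$ but $xQ_{0}$ involution-free shows that $Q_{0}$ is \emph{not} a perfect code of $G$. On the other hand, $Q_{0}$ and the given Sylow $2$-subgroup of $H$ are conjugate in $H$, so by Lemma \ref{conjugate} $Q_{0}$ \emph{is} a perfect code of $G$, a contradiction. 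Therefore $H$ is a perfect code of $G$.
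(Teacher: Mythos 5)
Your proof is correct. Its skeleton ultimately matches the paper's: both arguments funnel everything through a $2$-element witness $x \in G\setminus H$ with $x^{2}\in H$, $|H:H\cap xHx^{-1}|$ odd and $xH$ involution-free; both locate an $x$-normalized Sylow $2$-subgroup of $L=H\cap xHx^{-1}$ containing $x^{2}$ (which is Sylow in $H$ since $|H:L|$ is odd); and both finish by combining Sylow conjugacy with Lemma \ref{conjugate} and Corollary \ref{equivalent} (the paper uses the $2$-element variant, Corollary \ref{equivalent2}). But you execute two steps genuinely differently, and both variations are sound. First, you argue by contradiction and let the ``in particular'' clause of Theorem \ref{basic} hand you the witness $x$ outright, whereas the paper verifies the if-and-only-if condition of Theorem \ref{basic} directly: it starts from an arbitrary $g\in G\setminus H$ with $|H\{g,g^{-1}\}H|/|H|$ odd, manufactures $x=(gh_{1})^{s}$ via the odd-power trick, and must carry the bookkeeping $xb\in gH$ through to the end; your route avoids redoing this construction, which is already packaged inside Theorem \ref{basic}, and is correspondingly shorter. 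Second, to produce the invariant Sylow subgroup you use a fixed-point count: the $2$-group $\langle x\rangle$ acts on $\Syl_{2}(L)$, a set of odd cardinality, so it fixes some $Q_{0}$, and then $Q_{0}\langle x\rangle\cap L=Q_{0}$ forces $x^{2}\in Q_{0}$. The paper instead exploits that $L$ has index $2$ in $\langle x,L\rangle$ (as $x^{2}\in L$), chooses a Sylow $2$-subgroup $P$ of $\langle x,L\rangle$ containing $x$, and takes $Q=P\cap L$, which is automatically normal in $P$ (index $2$) and contains $x^{2}$. Your fixed-point maneuver is the more standard Sylow-theoretic device and needs no auxiliary overgroup; the paper's index-$2$ trick is more concrete but requires checking that $Q$ is Sylow in $L$. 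You were also right to dispose of the degenerate case $xq=1$ (impossible since $x\notin H$) before concluding from Corollary \ref{equivalent} that $Q_{0}$ is not a perfect code; that small point is exactly what makes ``$xQ_{0}$ involution-free'' equivalent to the failure of the coset condition.
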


\begin{proof}
Let $g$ be an arbitrary element of $G\setminus H$ such that $m = |H\{g,g^{-1}\}H|/|H|$ is odd. In the following we will prove that $gH$ contains an involution. Once this is achieved, it then follows from Theorem \ref{basic} that $H$ is a perfect code of $G$. 

Since $(HgH)^{-1} = Hg^{-1}H$, $HgH$ and $Hg^{-1}H$ have the same cardinality and hence contain the same number of left cosets of $H$ in $G$. Thus, if $HgH \cap Hg^{-1}H = \emptyset$, then $H\{g,g^{-1}\}H = HgH \cup Hg^{-1}H$ would be the union of an even number of distinct left cosets of $H$ in $G$, but this contradicts our assumption that $m$ is odd. So we have $HgH \cap Hg^{-1}H\neq\emptyset$, which implies $H\{g,g^{-1}\}H=HgH=Hg^{-1}H$. In particular,  $g^{-1}=h_{1}gh_2$ for some $h_1,h_2\in H$. Since $g\notin G\setminus H$ by our assumption, we have $gh_1\notin G\setminus H$. Since $(gh_1)^2=h_{2}^{-1}h_1\in H$, it follows that $gh_{1}$ is of even order. Let $s$ be the largest odd divisor of the order of $gh_{1}$. Set $x=(gh_1)^{s}$. Then $x$ is a $2$-element in $G\setminus H$ with $x^{2}\in H$. Since $x=gh_1(gh_1)^{s-1}$ and $h_1(gh_1)^{s-1}\in H$, we have $HxH=HgH$. Set $L=x^{-1}Hx\cap H$. Since $m$ is odd and $HxH$ is the union of $m$ left cosets of $H$ in $G$, it follows that $L$ is of odd index in $H$. Since $x^{-1}Lx=x^{-2}Hx^2\cap x^{-1}Hx=H\cap x^{-1}Hx=L$, $L$ is normal and is of index $2$ in $\langle x,L\rangle$. Let $P$ be a Sylow $2$-subgroup of $\langle x,L\rangle$ such that $x\in P$. Set $Q=P\cap L$. Then $x^{2}\in Q$ and $Q$ is of index $2$ in $P$. It follows that $Q$ is normal in $P$ and is a Sylow $2$-subgroup of $L$. Since $L$ is of odd index in $H$, $Q$ is also a Sylow $2$-subgroup of $L$.
By our assumption, $H$ has a Sylow $2$-subgroup which is a perfect code of $G$. On the other hand, by Sylow's Theorem (\cite[Theorem 3.2.3]{KS2004}), any two Sylow $2$-subgroups of $H$ are conjugate in $H$. Thus, by Lemma \ref{conjugate}, $Q$ is a perfect code of $G$. Since $Q$ is normal in $P$ and $x\in P$, we have $x\in N_{G}(Q)$. Hence, by Corollary \ref{equivalent2}, there exists $b\in Q$ such that $xb$ is an involution. Since $Q$ is a subgroup of $H$, we have $b\in H$. Since $xb = gh_{1}(gh_{1})^{s-1}b$ and $h_1(gh_{1})^{s-1}\in H$, the involution $xb$ is contained in $gH$, as required.
\end{proof}

Let $G$ be a group and $K$ a subgroup of $G$. If $K$ is of odd order, then it is called \cite{KS2004} a \emph{$2'$-subgroup} of $G$; if the order of $K$ is the largest odd divisor of the order of $G$, then $K$ is called \cite{KS2004} a \emph{Hall $2'$-subgroup} of $G$. Our last result in this section gives a sufficient condition for the product of a $2$-subgroup and a $2'$-subgroup to be a subgroup perfect code.

\begin{theorem}\label{QK}
Let $G$ be a group. Let $Q$ be a $2$-subgroup of $G$ and $K$ a $2'$-subgroup of $G$. Suppose that all Sylow $2$-subgroups of $N_{G}(Q)$ are contained in $N_{G}(K)$. Then $QK$ is a perfect code of $G$ if and only if $Q$ is a perfect code of $G$.

In particular, if $Q$ is a $2$-subgroup of $G$ and $K$ is a normal $2'$-subgroup of $G$, then $QK$ is a perfect code of $G$ if and only if $Q$ is a perfect code of $G$.
\end{theorem}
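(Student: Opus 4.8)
The plan is to begin by recording the structural facts that make the statement meaningful. Since $Q$ is a $2$-subgroup of $N_G(Q)$, it lies in some Sylow $2$-subgroup of $N_G(Q)$, which by hypothesis lies in $N_G(K)$; hence $Q\le N_G(K)$, so that $QK=KQ$ is a subgroup of $G$ with $K\trianglelefteq QK$, $Q\cap K=1$, and $Q$ a Sylow $2$-subgroup of $QK$. The ``in particular'' assertion is then immediate from the main statement, because a normal $2'$-subgroup $K$ satisfies $N_G(K)=G$, so the Sylow hypothesis holds trivially.

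For the ``if'' direction I would simply invoke Theorem \ref{ns}: as $Q$ is a Sylow $2$-subgroup of $H:=QK$, if $Q$ is a perfect code of $G$ then so is $QK$.

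The ``only if'' direction is the substantive one. Assume $QK$ is a perfect code of $G$. Since $Q$ is a $2$-group, by Corollary \ref{equivalent2} it suffices to show that every $2$-element $x\in N_G(Q)$ with $x^2\in Q$ admits some $b\in Q$ with $(xb)^2=1$. Fix such an $x$ and set $P_0=\langle Q,x\rangle$; as $x$ normalizes $Q$ and is a $2$-element, $P_0$ is a $2$-group, hence a $2$-subgroup of $N_G(Q)$, so it lies in a Sylow $2$-subgroup of $N_G(Q)$ and therefore in $N_G(K)$. Thus $x$ normalizes both $Q$ and $K$, so $x\in N_G(QK)$, and I would form $M=\langle QK,x\rangle=K\rtimes P_0$, a group in which $K$ is a normal $2'$-subgroup complemented by the $2$-group $P_0$ and in which $QK$ is normal.

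The key point is that Corollary \ref{equivalent} cannot be applied to $QK$ directly, since $QK$ need be neither a $2$-group nor of odd order or index; instead I would pass to $M$. By Lemma \ref{sub}, $QK$ is a perfect code of $M$, and since $QK\trianglelefteq M$ and $x^2\in Q\le QK$, Lemma \ref{HXZ} yields $c\in QK$ with $(xc)^2=1$. The remaining, decisive step is to replace $c$ by an element of $Q$: let $\phi\colon M\to P_0$ be the retraction with kernel $K$ afforded by the splitting $M=K\rtimes P_0$. Writing $c=kq$ with $k\in K$ and $q\in Q$, we get $\phi(c)=q\in Q$ and $\phi(x)=x$, so applying the homomorphism $\phi$ to $(xc)^2=1$ gives $(xq)^2=1$; taking $b=q$ finishes the verification, and Corollary \ref{equivalent2} then shows $Q$ is a perfect code of $G$. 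The main obstacle is exactly this passage from the mixed subgroup $QK$ back to the $2$-group $Q$, and it is overcome by working inside the semidirect product $M$ and projecting away the odd-order part $K$.
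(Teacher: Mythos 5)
Your proposal is correct, and while your structural setup, the sufficiency direction (via Theorem \ref{ns}) and the ``in particular'' reduction coincide with the paper's, your necessity argument takes a genuinely different route. The paper stays inside $G$ and applies Theorem \ref{basic} directly to $H=QK$: it notes that $Q\le H\cap x^{-1}Hx$ with $Q$ Sylow in $H$ forces $|H\{x,x^{-1}\}H|/|H|$ to be odd, extracts an involution $xab$ (with $a\in Q$, $b\in K$) from the coset $xH$, and then kills the $K$-part by the explicit two-sided computation $(xa)^{2}\in K$ (using $x,a\in N_{G}(K)$) together with $(xa)^{2}\in Q$ (using $x\in N_{G}(Q)$ and $x^{2}\in Q$), whence $(xa)^{2}\in Q\cap K=\{1\}$. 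You instead localize to $M=\langle QK,x\rangle=K\rtimes P_{0}$, where Lemma \ref{sub} restricts the perfect-code property and the normality of $QK$ in $M$ lets you invoke Lemma \ref{HXZ} to produce $c\in QK$ with $(xc)^{2}=1$; the retraction $\phi\colon M\to P_{0}$ with kernel $K$ then replaces the paper's hand computation, sending $c=kq$ to $q\in Q$ and yielding $(xq)^{2}=1$ in one stroke. Your route avoids the double-coset parity count of Theorem \ref{basic} altogether, and as a small bonus it covers the degenerate case $x\in QK$ seamlessly (Lemma \ref{HXZ} needs only $x\in M$), whereas the paper's appeal to Theorem \ref{basic} is phrased for $g\in G\setminus H$; conversely, the paper's approach requires no auxiliary group construction and works entirely within $G$. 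Both proofs ultimately hinge on the same two facts --- $x\in N_{G}(K)$, extracted from the Sylow hypothesis, and $Q\cap K=\{1\}$ --- so the difference is one of mechanism rather than substance, with your semidirect-product projection being arguably the cleaner encapsulation of the step ``project away the odd part''.
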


\begin{proof}
Since all Sylow $2$-subgroups of $N_{G}(Q)$ are contained in $N_{G}(K)$, $Q$ is contained in $N_{G}(K)$. Hence $QK$ is a subgroup of $G$ and $K$ is normal in $QK$. Since the order of $Q$ is a power of $2$ but the order of $K$ is odd, we have $Q\cap K=\{1\}$. Therefore, $Q$ is a Sylow $2$-subgroup of $QK$ and $K$ is a Hall $2'$-subgroup of $QK$. Thus, by Theorem \ref{ns}, if $Q$ is a perfect code of $G$, then $QK$ is a perfect code of $G$. This proves the sufficiency.

We now prove the necessity. Assume that $H=QK$ is a perfect code of $G$. Consider an arbitrary $2$-element $x \in N_{G}(Q)$ with $x^{2}\in Q$. Then $Q$ is contained in $H\cap x^{-1}Hx$. Since $x^{2}\in H$, we have $H\{x,x^{-1}\}H=HxH$. Since $Q$ is a Sylow $2$-subgroup of $H$ and is contained in $H\cap x^{-1}Hx$, $H\cap x^{-1}Hx$ is of odd index in $H$. It follows that $|H\{x,x^{-1}\}H|/|H|$ is odd. By Theorem \ref{basic}, there exists
$ab\in H$ where $a\in Q$ and $b\in K$ such that $(xab)^{2}=1$. Since $x$ is a $2$-element of $N_{G}(Q)$, $x$ is contained in a Sylow $2$-subgroup of $N_{G}(Q)$. Since all Sylow $2$-subgroups of $N_{G}(Q)$ are contained in $N_{G}(K)$, we have $x\in N_{G}(K)$. Therefore, $(xa)^{2}=xab^{-1}a^{-1}x^{-1}b^{-1}\in K$. On the other hand, $(xa)^{2}=x^{2}x^{-1}axa\in Q$. Since $Q\cap K=\{1\}$, it follows that $(xa)^{2}=1$. Thus, by Corollary \ref{equivalent2}, $Q$ is a perfect code of $G$.
\end{proof}

\section{Subgroup perfect codes in a few classes of groups}
\label{sec:classes}

In this section we study subgroup perfect codes in a few classes of groups, namely metabelian groups, generalized dihedral groups, nilpotent groups and $2$-groups.

The following result gives a necessary and sufficiency condition for a normal subgroup of a metabelian group to be a subgroup perfect code.

\begin{theorem}
\label{ma}
Let $G$ be a metabelian group and $H$ a normal subgroup of $G$. Then $H$ is a perfect code of $G$ if and only if it has a Sylow $2$-subgroup which is a perfect code of $G$.
\end{theorem}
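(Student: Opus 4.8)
The plan is to prove the two directions separately, with the sufficiency being essentially free. For the sufficiency, suppose some Sylow $2$-subgroup of $H$ is a perfect code of $G$; then Theorem \ref{ns} immediately gives that $H$ is a perfect code of $G$, and this step uses neither the normality of $H$ nor the metabelian hypothesis.

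For the necessity, assume $H$ is a perfect code of $G$ and fix a Sylow $2$-subgroup $P$ of $H$; I aim to show that $P$ itself is a perfect code of $G$, which certainly suffices. Since $P$ is a $2$-group, Corollary \ref{equivalent2} reduces this to showing that for every $2$-element $x\in N_{G}(P)$ with $x^{2}\in P$ there exists $b\in P$ with $(xb)^{2}=1$. I would fix such an $x$ and split into two cases according to whether $x\in H$.

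If $x\in H$, then since $P$ is a Sylow $2$-subgroup of $H$ the index $|H:P|$ is odd, so $P$ is a perfect code of $H$ by Theorem \ref{odd}; applying Corollary \ref{equivalent2} inside $H$ to the $2$-element $x\in N_{H}(P)$ produces the required $b\in P$. If $x\notin H$, set $G_{1}=H\langle x\rangle$, so that $H\trianglelefteq G_{1}$ with $|G_{1}:H|=2$ (as $x^{2}\in H$). By Lemma \ref{sub}, $H$ is a perfect code of $G_{1}$, and since $H$ is normal in $G_{1}$ with $x^{2}\in H$, Lemma \ref{HXZ} yields an involution $u=xh\in xH$ with $h\in H$. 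Now $P_{1}=P\langle x\rangle$ is a $2$-subgroup of order $2|P|$ with $P_{1}\cap H=P$, hence a Sylow $2$-subgroup of $G_{1}$, and $P_{1}$ is the disjoint union of $P$ and the coset $xP$. The involution $u$ lies in some Sylow $2$-subgroup of $G_{1}$, which by Sylow's theorem equals $gP_{1}g^{-1}$ for some $g\in G_{1}$; then $g^{-1}ug$ is an involution lying in $P_{1}$, and because conjugation fixes the nontrivial coset of the index-$2$ normal subgroup $H$, it remains in $xH$. Hence $g^{-1}ug\in xH\cap P_{1}=xP$, which is precisely an element $xb$ with $b\in P$ and $(xb)^{2}=1$, as required.

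The main obstacle is this last case: transferring the involution guaranteed in the coset $xH$ (coming from $H$ being a perfect code) down into the smaller coset $xP$. The device that makes this work is Sylow conjugacy inside $G_{1}=H\langle x\rangle$ combined with the observation that conjugation can only send an involution of $G_{1}\setminus H$ to an involution of $xP$. I would carefully verify the bookkeeping that $P_{1}=P\langle x\rangle$ is genuinely a Sylow $2$-subgroup of $G_{1}$ and that $P_{1}\cap H=P$, since the whole transfer hinges on these identities. I note that this route uses the normality of $H$ but does not obviously invoke the metabelian hypothesis; should the intended argument instead pass through the structure of $G/H$ or a Hall decomposition, the metabelian condition would enter there, but the Sylow-transfer argument above appears to bypass it.
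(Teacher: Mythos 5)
Your proof is correct, and it takes a genuinely different route from the paper's. The paper proves necessity through its machinery from Sections \ref{sec:gen} and \ref{sec:2subgp}: it sets $B=Q[G,H]$ for a Sylow $2$-subgroup $Q$ of $H$, notes $|H:B|$ is odd so that Theorem \ref{commutator}(b) makes $B$ a perfect code of $G$, and then strips off the odd part using Theorem \ref{QK} after observing that, since $G$ is metabelian, $[G,H]\le G'$ is abelian, so its Hall $2'$-subgroup is characteristic in $[G,H]$ and hence normal in $G$ --- this is exactly where the metabelian hypothesis enters, answering the question you raise at the end. Your argument instead verifies the criterion of Corollary \ref{equivalent2} for $P$ directly, and all the bookkeeping you flag does check out: $P_{1}=P\langle x\rangle$ is a subgroup because $x\in N_{G}(P)$, it has order $2|P|$ because $x^{2}\in P$ while $x\notin P$ (as $x\notin H\supseteq P$), $P_{1}\cap H=P$ because $xP\subseteq xH$ is disjoint from $H$, and $P_{1}$ is Sylow in $G_{1}=H\langle x\rangle$ because $|G_{1}|_{2}=2|H|_{2}=|P_{1}|$; the involution $u\in xH$ supplied by Lemma \ref{HXZ} (legitimately applied, since $H\trianglelefteq G_{1}$ and $H$ is a perfect code of $G_{1}$ by Lemma \ref{sub}) then conjugates into $P_{1}\setminus P=xP$ because conjugation by $g\in G_{1}$ preserves $H$ and hence its complement. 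The upshot is that your proof nowhere uses the metabelian hypothesis and so establishes the strictly stronger statement: for an \emph{arbitrary} finite group $G$ and any normal subgroup $H$, $H$ is a perfect code of $G$ if and only if a Sylow $2$-subgroup of $H$ is (and by Sylow conjugacy in $H$ together with Lemma \ref{conjugate}, ``some'' and ``every'' Sylow $2$-subgroup coincide here). What each approach buys: the paper's proof is a quick corollary of Theorems \ref{commutator} and \ref{QK} but is genuinely confined to metabelian groups, whereas your Sylow-transfer argument in the index-$2$ extension is self-contained and more general. One small simplification available in your first case: if $x\in H$ is a $2$-element of $N_{H}(P)$, then $P\langle x\rangle$ is a $2$-subgroup of $H$ containing the Sylow $2$-subgroup $P$, forcing $x\in P$, so $b=x^{-1}$ works at once, without invoking Theorem \ref{odd}.
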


\begin{proof}
The sufficiency follows from Theorem \ref{ns} immediately. To prove the necessity, we assume that $H$ is a perfect code of $G$. Let $Q$ be a Sylow $2$-subgroup of $H$ and set $B=Q[G,H]$. Since by our assumption $H$ is a normal subgroup of $G$, $[G,H]$ is contained in $H$. Therefore $B$ is a subgroup of $H$. Since $Q$ is contained in $B$ and is a Sylow $2$-subgroup of $H$, the index of $B$ in $H$ is odd. It follows from Theorem \ref{commutator} that $B$ is a perfect code of $G$.
Since $G$ is a metabelian group and $H$ is normal in $G$, $[G,H]$ is a normal Abelian subgroup of $G$. It follows that the Hall $2'$-subgroup of $[G,H]$ is a characteristic subgroup of $[G,H]$ and therefore a normal subgroup of $G$. Note that the Hall $2'$-subgroup of $[G,H]$ is also the Hall $2'$-subgroup of $B$. Therefore, by Theorem \ref{QK}, $Q$ is a perfect code of $G$.
\end{proof}

A generalized dihedral group \cite{KS2004} is a group of the form $G=A\rtimes \langle b\rangle$, where $A$ is a normal Abelian subgroup of $G$ and $b$ is an involution satisfying $b^{-1}ab=a^{-1}$ for every $a\in A$. The next result gives a necessary and sufficient condition for a subgroup of a generalized dihedral group to be a perfect code of the group. It asserts that a subgroup of $G$ is a perfect code of $G$ if and only if either it is not contained in $A$ or is a subgroup perfect code of $A$. In the special case when $G$ is a dihedral group, this result gives \cite[Theorem 2.11(a)]{HXZ18}.

\begin{theorem}
\label{gen-dih}
Let $G=A\rtimes \langle b\rangle$ be a generalized dihedral group. Then a subgroup of $G$ is a perfect code of $G$ if and only if either it is not a subgroup of $A$ or it is a subgroup perfect code of $A$.
\end{theorem}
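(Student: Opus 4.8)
The plan is to split on whether or not $H\le A$, driving both cases by the structural fact that every element of the nontrivial coset $Ab=G\setminus A$ is an involution: for $a\in A$ one has $(ab)^{2}=a(bab)=a\cdot a^{-1}=1$, using $b=b^{-1}$ together with the defining relation $bab=a^{-1}$.

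First I would dispose of the case $H\not\le A$, which I expect to always produce a perfect code. Here $H$ meets $G\setminus A$, so I fix some $a_{0}b\in H$ with $a_{0}\in A$ and verify the criterion of Theorem~\ref{basic} by showing that every left coset $gH$ with $g\in G\setminus H$ contains an involution. If $g\in G\setminus A$ then $g$ itself is an involution lying in $gH$. If instead $g\in A$, then $ga_{0}b\in gH$ lies in $G\setminus A$ and is therefore an involution. In either case $gH$ contains an involution, so Theorem~\ref{basic} yields that $H$ is a perfect code of $G$; in particular the ``not a subgroup of $A$'' alternative always suffices.

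Next I would treat the case $H\le A$. The key preliminary observation is that $H$ is normal in $G$: since $A$ is Abelian we have $H\trianglelefteq A$, and $b^{-1}hb=h^{-1}\in H$ for every $h\in H$ shows that $b$ normalizes $H$, whence $H\trianglelefteq G$. The ``only if'' direction is then immediate from Lemma~\ref{sub}, since $H$ a perfect code of $G$ forces $H$ to be a perfect code of the intermediate subgroup $A$. For the ``if'' direction I would assume $H$ is a perfect code of $A$ and apply Lemma~\ref{HXZ} to the normal subgroup $H$ of $G$: given $x\in G$ with $x^{2}\in H$, if $x\in G\setminus A$ then $x^{2}=1$ and $h=1$ works, while if $x\in A$ then $x^{2}\in H$ and Lemma~\ref{HXZ} applied inside $A$ (where $H\trianglelefteq A$ is a perfect code) supplies $h\in H$ with $(xh)^{2}=1$. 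Hence the Lemma~\ref{HXZ} criterion holds in $G$ and $H$ is a perfect code of $G$.

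I do not anticipate a serious obstacle; the whole argument is powered by the involution property of $Ab$ and by the automatic normality of $H$ once $H\le A$. The one point demanding care is the reduction in the case $H\le A$: one must note that the condition $x^{2}\in H$ is automatically (indeed trivially) satisfied for $x$ outside $A$, so that the Lemma~\ref{HXZ} criterion over $G$ collapses to the same criterion over $A$, and that the equation $(xh)^{2}=1$ is intrinsic, giving the same outcome whether read in $A$ or in $G$.
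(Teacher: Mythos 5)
Your proposal is correct and follows essentially the same route as the paper: the same case split on whether $H\le A$, with the first case settled via the involution criterion of Theorem~\ref{basic} (using that every element of $Ab$ squares to $1$) and the second via normality of $H$ in $G$ together with Lemma~\ref{HXZ} applied in $A$, the converse coming from Lemma~\ref{sub}. Your only (harmless) deviation is a slight streamlining of the first case: by observing that $g\in gH$ is itself an involution when $g\notin A$, you avoid the paper's sub-distinction between $H=\langle ab\rangle$ and $H\neq\langle ab\rangle$.
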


\begin{proof}
Suppose that $H$ is a perfect code of $G$. If $H$ is a subgroup of $A$, then by Lemma \ref{sub}, $H$ is a perfect code of $A$.

We now prove the sufficiency.

\smallskip
\textbf{Case 1.}~$H$ is not a subgroup of $A$.

In this case we have $ab\in H$ for some $a\in A$. If $H=\langle ab\rangle$, then $A$ is a Cayley transversal of $H$, and so $H$ is a perfect code of $G$ by Lemma \ref{Ct}. Assume that $H\neq\langle ab\rangle$. Then $H$ contains at least one element of $A$. Let $c$ be such an element. Then for any $g\in G\setminus H$, $gab$ or $gc$ is an involution. In other words, the coset $gH$ contains at least one involution. By Theorem \ref{basic}, we conclude that $H$ is a perfect code of $G$.

\smallskip
\textbf{Case 2.}~$H$ is a subgroup perfect code of $A$.

Note that $H$ is normal in $G$. Consider an arbitrary element $x\in G$ with $x^{2}\in H$. If $x^{2}\neq 1$, then $x\in A$. Since $H$ is a perfect code of $A$, by Lemma \ref{HXZ}, there exists $h\in H$ such that $(xh)^{2}=1$. Thus, by Lemma \ref{HXZ} again, $H$ is a perfect code of $G$.
\end{proof}

The next result shows that for nilpotent groups the problem of determining whether a subgroup is a perfect code can be reduced to the one of determining whether a Sylow $2$-subgroup is a perfect code.

\begin{theorem}
\label{nilp}
Let $G$ be a nilpotent group and $H$ a subgroup of $G$. Then $H$ is a perfect code of $G$ if and only if the Sylow $2$-subgroup of $H$ is a perfect code of $G$.
\end{theorem}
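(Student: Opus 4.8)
The plan is to reduce the statement to Theorem~\ref{QK} by exploiting the Sylow decomposition of a finite nilpotent group. First I would write $G = P \times R$, where $P$ is the (necessarily unique, hence normal) Sylow $2$-subgroup of $G$ and $R$ is the Hall $2'$-subgroup, i.e.\ the direct product of the Sylow $p$-subgroups of $G$ over all odd primes $p$. Recall that a finite group is nilpotent precisely when it is the internal direct product of its Sylow subgroups, so here $P$ and $R$ are both normal in $G$ and centralise each other, giving $[P,R]=1$.

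Next I would locate the relevant subgroups inside $H$. Since $H$ is a subgroup of a nilpotent group it is itself nilpotent, so it has a unique Sylow $2$-subgroup. Setting $Q = H \cap P$ and $K = H \cap R$, an element of $H \le P\times R$ is a $2$-element if and only if its $R$-component is trivial (because $|R|$ is odd), so $Q$ is exactly the set of $2$-elements of $H$; thus $Q$ is the Sylow $2$-subgroup of $H$, $K$ is its Hall $2'$-subgroup, $Q\cap K=\{1\}$, and $H = QK$.

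The key step, and the one I expect to require the most care, is verifying the normaliser hypothesis of Theorem~\ref{QK}: that every Sylow $2$-subgroup of $N_{G}(Q)$ is contained in $N_{G}(K)$. Here the direct-product structure does the work. For $g=(a,r)\in P\times R$ and $q\in Q\le P$ we have $gqg^{-1}=aqa^{-1}$ since $r$ centralises $P$, whence $N_{G}(Q)=N_{P}(Q)\times R$. As $|N_{P}(Q)|$ is a power of $2$ and $|R|$ is odd, $N_{P}(Q)$ is the unique Sylow $2$-subgroup of $N_{G}(Q)$. Finally, because $N_{P}(Q)\le P$ and $K\le R$ with $[P,R]=1$, the subgroup $N_{P}(Q)$ centralises $K$ and so normalises it, giving $N_{P}(Q)\le N_{G}(K)$ as required.

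With the hypothesis confirmed, Theorem~\ref{QK} applies to the $2$-subgroup $Q$ and the $2'$-subgroup $K$ and yields that $H=QK$ is a perfect code of $G$ if and only if $Q$ is a perfect code of $G$; since $Q$ is the Sylow $2$-subgroup of $H$, this is exactly the assertion. I would note in passing that $K$ itself need not be normal in $G$ (this fails as soon as $R$ is nonabelian and $K$ is a non-normal subgroup of it), which is why I would invoke the general normaliser form of Theorem~\ref{QK} rather than its ``in particular'' special case.
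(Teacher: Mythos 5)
Your proof is correct and takes essentially the same route as the paper's: both reduce the statement to Theorem~\ref{QK} by writing $H$ as the product of its Sylow $2$-subgroup $Q$ and its Hall $2'$-subgroup $K$ and then verifying the normaliser hypothesis of that theorem from the direct-product (nilpotent) structure of $G$. Your explicit computation $N_{G}(Q)=N_{P}(Q)\times R$ is simply a more detailed version of the paper's one-line observation that the Sylow $2$-subgroup of $N_{G}(Q)$ lies in the Sylow $2$-subgroup of $G$, which centralises and hence normalises $K$.
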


\begin{proof}
Since $G$ is nilpotent and $H$ is a subgroup of $G$, $H$ is nilpotent. Let $Q$ and $K$ be the Sylow $2$-subgroup of $G$ and the Hall $2'$-subgroup of $G$, respectively. Then $H=QK$. Since the Sylow $2$-subgroup of $G$ is contained in $N_{G}(K)$, the Sylow $2$-subgroup of $N_{G}(Q)$ is contained in $N_{G}(K)$. Thus, by Theorem \ref{QK}, $H$ is a perfect code of $G$ if and only if $Q$ is a perfect code of $G$.
\end{proof}

Recall that the Frattini subgroup \cite{KS2004} $\Phi(G)$ of a group $G$ is the intersection of all maximal subgroups of $G$. Equivalently, $\Phi(G)$ is the set of elements $g$ of $G$ with the property that $G = \langle g, X \rangle$ always implies $G = \langle X \rangle$ when $X$ is a subset of $G$.

\begin{theorem}
\label{2gp}
Let $G$ be a $2$-group. Then either each cyclic subgroup generated by an element of $G \setminus \Phi(G)$ is a perfect code or there exists a generalized quaternion subgroup of $G$ which is a perfect code of $G$.
\end{theorem}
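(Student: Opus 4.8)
The plan is to set up a dichotomy driven by the last statement of Theorem \ref{basic}, together with the classical fact that a finite $2$-group possessing a unique involution is either cyclic or generalized quaternion, and the fact that for a $2$-group $\Phi(G)=G^{2}[G,G]$. If every cyclic subgroup $\langle g\rangle$ with $g\in G\setminus\Phi(G)$ is a perfect code of $G$, the first alternative holds and there is nothing to prove. So I would assume that some $\langle g\rangle$ with $g\in G\setminus\Phi(G)$ fails to be a perfect code, and aim to produce a generalized quaternion subgroup of $G$ that \emph{is} a perfect code.

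First I would analyse the failure. Since $G$ is a $2$-group, $H=\langle g\rangle$ is a $2$-group, so the last statement of Theorem \ref{basic} applies: there is a (necessarily $2$-)element $x\in G\setminus\langle g\rangle$ with $x^{2}\in\langle g\rangle$, with $|\langle g\rangle:\langle g\rangle\cap x\langle g\rangle x^{-1}|$ odd, and with $x\langle g\rangle$ containing no involution. As this index is a power of $2$, it equals $1$, so $x\in N_{G}(\langle g\rangle)$ and $\langle g,x\rangle=\langle g\rangle\sqcup x\langle g\rangle$ contains $\langle g\rangle$ as a subgroup of index $2$. The cyclic group $\langle g\rangle$ contributes a single involution, and $x\langle g\rangle$ contributes none, so $\langle g,x\rangle$ has a unique involution and is therefore cyclic or generalized quaternion. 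It cannot be cyclic: a cyclic $\langle g,x\rangle$ would force $\langle g\rangle=\Phi(\langle g,x\rangle)\le G^{2}\le\Phi(G)$, contradicting $g\notin\Phi(G)$. Hence $\langle g,x\rangle$ is generalized quaternion, and in particular $G$ contains a generalized quaternion subgroup.

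Next I would run essentially the same argument ``one level up''. Among all generalized quaternion subgroups of $G$---which are now known to exist---choose one, $Q$, of maximal order, and claim $Q$ is a perfect code of $G$. If not, the last statement of Theorem \ref{basic} again yields a $2$-element $y\in G\setminus Q$ with $y^{2}\in Q$, with $|Q:Q\cap yQy^{-1}|$ odd hence equal to $1$ (so $y\in N_{G}(Q)$), and with $yQ$ containing no involution. Since $Q$ is generalized quaternion it has a unique involution, and $yQ$ has none, so $\langle Q,y\rangle=Q\sqcup yQ$ again has a unique involution. It is non-cyclic, since it contains the non-abelian group $Q$, so it is generalized quaternion of order $2|Q|$, contradicting the maximality of $Q$. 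Therefore $Q$ is a perfect code of $G$, giving the second alternative.

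The routine part is the verification that the two cosets exhaust $\langle g,x\rangle$ and $\langle Q,y\rangle$ and that each added coset contributes no involution; the conceptual heart is the single observation used twice: Theorem \ref{basic} manufactures an element whose coset avoids involutions, and ``unique involution plus non-cyclic'' is exactly the signature of a generalized quaternion $2$-group. I expect the only point needing genuine care is ruling out the cyclic possibility for $\langle g,x\rangle$, which is precisely where the hypothesis $g\notin\Phi(G)$ is used via $\Phi(G)=G^{2}[G,G]$; the maximality argument then disposes of the cyclic possibility for $\langle Q,y\rangle$ automatically.
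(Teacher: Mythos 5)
Your proposal is correct, and while its first half matches the paper's opening move in substance, its second half takes a genuinely different route --- one that is in fact more robust than the paper's. Both arguments extract from the failure of $\langle g\rangle$ an element $x$ normalizing $\langle g\rangle$ with $x^{2}\in\langle g\rangle$ whose coset $x\langle g\rangle$ avoids involutions (the paper does this via Corollary \ref{equivalent}, you via the last statement of Theorem \ref{basic} plus the observation that an odd index in a $2$-group is trivial --- equivalent mechanics), and both then combine the unique-involution count on $\langle g\rangle\sqcup x\langle g\rangle$ with $g\notin\Phi(G)$ and the fact that $\Phi(G)$ contains all squares of the $2$-group $G$ to rule out the cyclic case and produce a generalized quaternion subgroup. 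The divergence is in the second step: the paper fixes the constructed subgroup $H=\langle b,c\rangle$ and claims $H$ itself is a perfect code, resting on the assertion that $L=\langle x,H\rangle$ is never generalized quaternion for $x\in N_{G}(H)\setminus H$ with $x^{2}\in H$. That assertion is delicate, since generalized quaternion groups of order at least $16$ do contain generalized quaternion maximal subgroups, and it genuinely fails when $H\cong Q_{8}$: take $G=Q_{16}=\langle a,t\rangle$ with $a^{8}=1$, $t^{2}=a^{4}$, $t^{-1}at=a^{-1}$, and $c=t\notin\Phi(G)=\langle a^{2}\rangle$; then every admissible $b$ lies in $N_{G}(\langle t\rangle)=\langle a^{2},t\rangle$, so $H=\langle a^{2},t\rangle\cong Q_{8}$, and $x=a$ yields $L=Q_{16}$, with $aH$ containing no involution --- so this $H$ is \emph{not} a perfect code of $G$ (the theorem survives because $G$ itself is a generalized quaternion subgroup that is a perfect code of itself, by the paper's convention). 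The hypothesis $c\notin\Phi(G)$ rescues the paper's assertion only when $|H|\ge 16$, where $C$ would have to be the unique cyclic maximal subgroup of $H$, hence equal to $\Phi(L)$, hence contained in $\Phi(G)$. Your extremal choice of a generalized quaternion subgroup $Q$ of maximal order sidesteps this entirely: any failure of $Q$ manufactures, by the same unique-involution argument (with non-cyclicity now free, since $Q$ is nonabelian), a generalized quaternion subgroup of order $2|Q|$, contradicting maximality --- uniformly, including the $Q_{8}$ case and the degenerate possibility $Q=G$. So your proof is not merely an alternative; the maximality step repairs the weak point in the paper's argument at essentially no extra cost.
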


\begin{proof}
Suppose that there exists $c \in G \setminus \Phi(G)$ such that $C := \langle c\rangle$ is not a perfect code of $G$. Then $C$ is a proper subgroup of $G$. By  Corollary \ref{equivalent}, there exists $b\in N_{G}(C)$ with $b^{2}\in C$ but $bC$ contains no involution. Set $H=\langle b,c\rangle$. Then $H=C\cup bC$ and therefore $H$ contains exactly one involution. Since $c \in G \setminus \Phi(G)$ and $b\notin C$, $H$ is not cyclic. It is known that a noncyclic 2-group which contains exactly one involution must be a generalized quaternion group (see \cite[Theorem 5.3.7]{KS2004}). Hence $H$ is a generalized quaternion group. If $G=H$, then $H$ is a perfect code of $G$. In the rest of the proof we assume that $G\neq H$.  Consider an arbitrary element $x\in N_{G}(H)$ with $x^{2}\in H$ and set $L=\langle x, H\rangle$. If $x\in H$, then $x^{-1}\in H$ and $(xh)^{2}=1$ for $h = x^{-1}$. If $x\notin H$, then $L$ is not a generalized quaternion group and hence contains at least two involutions. Since $L=H\cup xH$ and $H$ contains exactly one involution, there exists $h\in H$ such that $(xh)^{2}=1$. Thus, by Corollary \ref{equivalent}, $H$ is a perfect code of $G$.
\end{proof}

\smallskip

\noindent {\textbf{Acknowledgements}}~~We are grateful to the two anonymous referees whose comments and suggestions led to significant improvements of this paper. The first author was supported by the National Natural Science Foundation of China (No.~11671276), the Basic Research and Frontier Exploration Project of Chongqing (No.~cstc2018jcyjAX0010), and the Science and Technology Research Program of Chongqing Municipal Education Commission (No.~KJQN201800512). The second author was supported by the National Natural Science Foundation of China (No.~61771019) and the Research Grant Support Scheme of The University of Melbourne.





\end{document}